\begin{document}
\title{A Tensor-Train accelerated solver for integral equations\\ in complex geometries}
\author[umich]{Eduardo Corona} \ead{coronae@umich.edu}
\author[nyu]{Abtin Rahimian\corref{cor}} \ead{arahimian@acm.org}
\author[nyu]{Denis Zorin} \ead{dzorin@cs.nyu.edu}
\address[umich]{Department of Mathematics, University of Michigan,  Ann Arbor, MI 48109}
\address[nyu]{Courant Institute of Mathematical Sciences, New York University, New York, NY 10003}
\cortext[cor]{Corresponding author}
\begin{abstract}
  We present a framework using the Quantized Tensor Train (\QTT)
  decomposition to accurately and efficiently solve volume and
  boundary integral equations in three dimensions.
  We describe how the \QTT\ decomposition can be used as a hierarchical
  compression and inversion scheme for matrices arising from
  the discretization of integral equations.
  For a broad range of problems, computational and storage costs of
  the inversion scheme are extremely modest $\O{\log N}$ and once the
  inverse is computed, it can be applied in $\O{N \log N}$.

  We analyze the \QTT\ ranks for hierarchically low rank matrices and
  discuss its relationship to commonly used hierarchical compression
  techniques such as \FMM\ and \HSS.
  We prove that the \QTT\ ranks are bounded for translation-invariant
  systems and argue that this behavior extends to non-translation
  invariant volume and boundary integrals.

  For volume integrals, the \QTT\ decomposition provides an efficient
  direct solver requiring significantly less memory compared to other
  fast direct solvers.
  We present results demonstrating the remarkable performance of the
  \QTT-based solver when applied to both translation and
  non-translation invariant volume integrals in \threed.

  For boundary integral equations, we demonstrate that using a
  \QTT\ decomposition to construct preconditioners for a Krylov
  subspace method leads to an efficient and robust solver with a small
  memory footprint.
  We test the \QTT\ preconditioners in the iterative solution of an
  exterior elliptic boundary value problem (Laplace) formulated as a
  boundary integral equation in complex, multiply connected
  geometries.

  \begin{keyword}
    Integral equations\sep
    Tensor Train decomposition\sep
    Preconditioned iterative solver\sep
    Complex Geometries\sep
    Fast Multipole Methods\sep
    Hierarchical matrix compression and inversion
  \end{keyword}
\end{abstract}

\maketitle

\section{Introduction\label{sec:intro}}
We aim to efficiently and accurately solve equations of the form

\begin{equation}
  a \sigma(x) + \int_{\Omega} b(x) K(x,y) c(y) \sigma(y) \d \Omega_y =
  f(x),\qquad \text{for all } x \in \Omega, \tag{IE}
  \label{eq:basic-integral}
\end{equation}
where $\Omega$ is a domain in $\mathbb{R}^{3}$ (either a boundary or a
volume).
When $a \ne 0$, the integral equation is Fredholm of the second kind,
which is the case for all equations presented in this work.
A large class of physics problems formulated as {\PDE}s may be cast in
this equivalent form. $K(x,y)$ for these type of problems is a
\textit{kernel function} derived from the fundamental solution of the
\PDE.
The advantages of integral equation formulations include reducing
dimension of the problem from three to two and improved conditioning
of the discretization.

The kernel function $K(x,y)$ is typically singular as $x$ approaches
$y$ but is smooth otherwise.
For the purposes of this paper, we also assume it is not highly
oscillatory.

A discretization of \pr{eq:basic-integral} produces a linear system
of equations
\begin{equation}
  \la{A\sigma = f},\tag{LS}
  \label{eq:Asigma}
\end{equation}
where $\la{A}$ is a dense $N \times N$ matrix.
Krylov subspace methods such as \GMRES~\cite{GMRES} coupled with the
rapid evaluation algorithms such as \FMM~\cite{greengard1987fast} are
widely used to solve this system of equations.
However, the performance of the iterative solver is directly affected
by the eigenspectrum of \pr{eq:basic-integral}.

The eigenspectrum of the system, while typically independent of the
resolution of the discretization, can vary greatly, depending on the
geometric complexity of $\Omega$ and the kernel $K$ in particular.
When the spectrum is clustered away from zero, the system is solved in
a few iteration using a suitable iterative method. However, this is
not the case for a number of problems of interest (e.g., when
different parts of the boundary approach each other).
Such problems may either be solved by constructing an effective
preconditioner for the iterative solver or by using \emph{direct
  solvers}, in which the system is solved in a fixed time independent
of the distribution of the spectrum.

There have been a number of efforts to develop robust, fast direct
solvers with linear complexity for systems given in \pr{eq:Asigma}.
When $\Omega$ is a contour in the plane, extremely efficient $\O{N}$
algorithms such as \cite{MR2005} exist.
These algorithms may be extended to volumes in \twod\ and surfaces in
\threed, producing direct solvers with complexity $\O{N^{3/2}}$
\cite{GYMR2012, ho2012fast, G2011thesis}.
More recently, approaches that aim to extend linear complexity to
Hierarchically Semi-Separable (\HSS) matrices have been developed
\cite{CMZ14, ho2015hierarchical_ie}.
Furthermore, a general inverse algorithm has been proposed for \FMM\
matrices \cite{ambikasaran2014inverse}.

For \twod\ problems, these types of methods have excellent performance and
remain practical even at high target accuracies, e.g., \sci{-10}.
However, for volume or boundary integral equations in \threed,
especially in complex geometries, algorithmic constants in the
complexity of these methods grow considerably as a function of
accuracy.
In particular, the compressed form of the inverse typically requires a
very large amount of storage per degree of freedom, limiting the range
of practical target accuracies or problem sizes that one can solve.
This also precludes  efficient parallel implementation, due to the need to
store and communicate large amounts of data.

Basic iterative solvers (requiring only matrix-vector multiplication)
and direct solvers, represent two extremes of the spectrum of
\emph{preconditioned iterative solvers}, as a direct solver can be
viewed as a preconditioned solver with a perfect preconditioner
requiring one iteration to converge.
These also represent two extremes in the tradeoff between memory and
time spent for computing the preconditioner as well as the cost of
each solve.
A direct solver requires a lot of memory and precomputation time
for  the inverse matrix, but solving a system for a specific
right-hand side is typically very fast.
On the other extreme, a non-preconditioned iterative solver requires
only a matrix-vector multiplication function, either requiring no
precomputation, or compression of the matrix (but not computing its
inverse).
However, each solve in this case may require a large number of
iterations.

Varying the accuracy of the approximate inverse preconditioner leads to
intermediate solvers, with reduced storage and time required for
precomputation, but increased time needed for solves.
By varying this accuracy, we can find a ``sweet spot'' for a
particular type of problem and let the practitioner strike a
reasonable trade-off between precomputation time and solve time,
within the available memory budget.

\subsection{Contributions and outline}
We present an effective and memory-efficient preconditioned solver for \pr{eq:Asigma} based on the quantized \TTrain\ decomposition (\QTT) \cite{oseledets2010tt,khoromskij2009dlog}. The \QTT\ decomposition allows for the compact representation and fast arithmetic of structured matrices by recasting them in tensor form. 

In this work, we frame this process in the context of hierarchical compression and inversion for matrix $\la{A}$. We show that for a range of target accuracies, \QTT\ decomposition achieves significantly higher compression by finding a common basis for all interactions at a given level of the hierarchy. We argue this makes it a natural fit for the solution of integral equations, and discuss how, for certain problems, it can achieve superior performance versus commonly used hierarchical compression techniques such as \FMM\ and variants of $\Hmat$-matrices.

We prove rank bounds for the \QTT\ ranks of $\la{A}$ for translation-invariant systems, as well as for a family of non-translation invariant systems obtained from volume integral equations in \pr{sec:integraleq}, and provide evidence that this behavior extends to boundary integrals in complex geometries. In our experiments in \pr{sec:results}, we find that the inverse $\la{A}^{-1}$ is also highly compressible, displaying comparable rank behavior to that of $\laT{A}$ in all cases considered. 

In our presentation of the \QTT\ inversion process in \pr{sec:inversion}, we contribute novel strategies to precondition
the \QTT\ inversion algorithms, representing the inverse as a product of matrix factors in the \QTT\ form to achieve faster computation. 

Finally, we perform an extensive series of numerical experiments to evaluate the performance and experimental scaling of the \QTT\ approach for volume and boundary integral equations of interest. In both instances, we confirm key features of \QTT\ that distinguish it from existing fast direct solver techniques:

\begin{itemize}
\item \lh{Logarithmic scaling.} Given a target accuracy, the matrix
  compression and inversion steps based on rank-revealing techniques
  as well as the storage are observed to scale no faster than
  $\O{\log~N}$.
  This \emph{sublinear} scaling, remarkably, implies that the relative
  cost of the computation stage prior to a solve (direct or
  preconditioned iterative) becomes negligible as $N$ grows.
\item \lh{Memory efficiency.} One of the main issues of current fast
  direct solvers is that even when they retain linear scaling, they
  require significant storage per degree of freedom for problems in
  \threed.
  Due to its logarithmic scaling, the \QTT\ decomposition completely
  sidesteps this, requiring no more than $100$~\abbrev{MB} of memory
  for the compression and inversion of systems with a large number of
  unknowns ($N \sim 10^7{-}10^8$).
\item \lh{Fast matrix-vector and matrix-matrix operations.} If both of
  the operands are represented in the \QTT\ format, $\O{\log N}$
  matrix-vector and matrix-matrix apply algorithms are available.
  Furthermore, $\O{N \log N}$ matrix-vector apply algorithm
  exists for the multiplication of a matrix in the \QTT\ format with a
  non-\QTT-compressed vector.
\end{itemize}

In \pr{ssc:tt-vie}, we present results demonstrating the remarkable performance of a \QTT-based \emph{direct} solver for translation and non-translation invariant volume integral equations in \threed\  for target accuracies up to \sci{-10} (e.g., arising in the context of Poisson--Boltzmann or Lippmann--Schwinger equations), for which existing direct solvers are generally not practical.

In \pr{ssc:bie-lattice}, we demonstrate that using the \QTT\ framework to construct preconditioners for a Krylov subspace method leads to an efficient and robust solver with a small memory footprint for boundary integral equations in complex geometries. We test this \QTT-based preconditioner in the iterative solution of an exterior elliptic boundary value problem (Laplace) formulated as a boundary integral equation in complex, multiply connected geometries\Mdash/a model problem for a range of problems with low frequency kernels, e.g., particulate Stokes flow, electrostatics and magnetostatics. We show that the \QTT-based approach significantly outperforms other state-of-the-art methods in memory efficiency, while being comparable in speed.

\subsection{Scope and limitations}
In this work we only consider linear systems from Fredholm integral
equation of the second kind, uniformly refined octree partitions of
the domain (i.e., no adaptivity), and serial versions of the algorithms
only.
Extension of some of the \QTT\ compression and inversion
algorithms to obtain good parallel scaling is an interesting research
direction in its own right.

The main limitation of the current framework as a hierarchical
inversion tool of linear operators is the quartic\footnote{Assuming
  the inverse has similar \QTT\ ranks to that of $\la{A}$, which is
  the case for systems arising from integral equations.} dependence of
performance on the \QTT\ ranks in the inversion algorithm, see
\pr{ssc:inv-complexity}. To a large extent this fact necessitates the
use of \QTT\ to obtain a preconditioner (rather than a direct solver)
for problems with high variation of coefficients.

\subsection{Related work}
Solution of \pr{eq:Asigma} is computed with low computational
complexity either iteratively or directly.
The former leverages rapid evaluation algorithms such as
\FMM\ combined with Krylov subspace methods and the latter is based on
fast direct solvers.
At the heart of rapid evaluation or fast direct inversion algorithms
lies the observation that, due to the properties of the underlying
kernel, off-diagonal matrix blocks have low numerical rank.
Using a hierarchical division of the integration domain
$\Omega$\Mdash/represented by a tree data structure\Mdash/these
algorithms exploit this low rank property in a multi-level fashion.

\subsubsection{Iterative solvers for integral equations}
During the 1980s, the development of rapid evaluation algorithms for
particle simulations such as the Fast Multipole
Method~\cite{rokhlin1985, greengard1987fast, rokhlin1997},
Panel~Clustering~\cite{PanelClustering}, and
Barnes-Hut~\cite{BarnesHut} as well as the development of Krylov
subspace methods for general matrices such as \GMRES~\cite{GMRES} or
\abbrev{Bi-CGSTAB}~\cite{BiCGSTAB} provided $\O{kN}$ or $\O{kN\log N}$
frameworks for solving systems of the form \pr{eq:Asigma}, where $k$
is the required number of iterations in the iterative solver and
directly depend on the conditioning of the problem.

\subsubsection{Direct solvers for integral equations}
Fast direct solvers avoid conditioning issues, and lead to substantial
speedups in situations where the same equation is solved for multiple
right-hand-sides.  Fast direct solvers for \HSS\ matrices were
introduced in \cite{starr_rokhlin, greengard1996direct}.
\citet{MR2005} describe an optimal-complexity direct solver for
boundary integrals in the plane.
Extensions of this solver to surfaces in \threed\ were developed in
\cite{GYMR2012, ho2012fast, G2011thesis} and in the related works
of \cite{Gu06sparse, Gu06ULV, xia2010}.
As we mentioned earlier, for \threed\ surfaces, the complexity of inversion
in these algorithms is $\O{N^{3/2}}$.
Since this increase is due to the growth in rank of off-diagonal
interactions, additional compression is required to regain optimal
complexity.
\citet{CMZ14} achieved this for volume integral equations
in \twod\ by an additional level of hierarchical compression of the blocks
in the \HSS\ structure.
While a similar approach can be applied to surfaces in \threed, it results
in a significant increase in required memory for the inverse storage
as well as longer computation times.

\citet{ho2015hierarchical_ie} proposed an alternate approach, the
Hierarchical Interpolative Factorization (\abbrev{HIF}), using
additional skeletonization levels and implemented it for both
\twod\ volume and \threed\ boundary integral equations using standard
direct solvers for sparse matrices in an augmented system. While the
structure of the algorithms suggests linear scaling, for
\threed\ problems the observed behavior is still above linear.

In a series of papers on $\Hmat$- and $\Hmat^2$-matrices, Hackbusch
and coworkers constructed direct solvers for \FMM-type matrices.
The reader is referred to \cite{borm2003hierarchical,
  2008_bebendorf_book, 2010_borm_book} for in depth surveys.
The observed complexity for integral equation operators, as reported in
\cite[Chapter 10]{2010_borm_book}, is $\O{N\log^4 N}$ for matrix
compression, $\O{N\log^3 N}$ for inversion, and $\O{N\log^2 N}$ for
solve time and memory usage, with relatively large constants.

More recently, a promising inverse \FMM\ algorithm was introduced
\cite{ambikasaran2014inverse,IFMM2015prec}, demonstrating efficient
performance and $\O{N}$ scaling for \twod\ and \threed\ volume computations.

\subsubsection{Preconditioning techniques for integral equations}
The convergence rate of \GMRES\ is mainly controlled by the distribution
of the eigenvalues in the complex plane
\cite{Nachtigal1992a,Benzi2002}. Preconditioning techniques aim to
improve the rate by clustering the eigenvalues away from zero.
For excellent reviews on the general preconditioning techniques the
reader is referred to \cite{Benzi2002,Wathen2015}.
Here, we focus on the preconditioners tailored for linear systems
arising from the discretization of integral equations.

Most preconditioning techniques for integral equations can be
categorized as sparse approximate inverse (\abbrev{SPAI}) or multi-level
schemes.
\abbrev{SPAI} seeks to find a preconditioner $\la{M}$ that satisfies
$\min \|\la{I-M\tilde{A}}\|_F$ subject to some
constraint on the sparsity pattern of $\la{M}$\Mdash/typically chosen
\emph{a priori}.
Here $\la{\tilde{A}}$ denotes an approximation to $\la{A}$ to make the
optimization process economical.
Multi-level preconditioning methods include stationary iteration
techniques like multigrid and single-grid low-accuracy inverse.

Apart from \abbrev{SPAI} and multi-level methods, some authors used
incomplete factorizations as preconditioner for integral equations
\cite[and references therein]{Wang2007}.
However, because of their potential instabilities, difficulty of
parallelization, lack of algorithmic scalability, and non-monotonic
performance as a function of fill-ins \cite{Benzi2002} they are less
popular for integral equations.

\para{Sparse approximate inverse preconditioners (\abbrev{SPAI})}
\abbrev{SPAI}s with sparsity and approximation based on geometric
adjacency (e.g. \FMM\ tree) are a popular choice for boundary integral
equations \cite{Vavasis1992, Nabors1994, Tausch1996, Grama1996a,
  Tausch1997, Chan1997, Carpentieri2003a, Carpentieri2005a, Wang2007},
due to their low computation and application cost and scalability.
\citet{Vavasis1992} introduced the (\emph{mesh neighbor} scheme), with
the sparsity pattern defined for an \FMM\ octree/quadtree cell by
near-interaction cells, and \emph{hierarchical clustering} improving
the mesh-neighbor scheme using first-order multipoles from far boxes.
Variations of these schemes are found in \cite{Nabors1994,Tausch1996,
  Pissoort2006}.
For certain problems, mesh-neighbor is effective in reducing the
number of iterations but its performance depends on the grid size, and
it is most effective when the far interactions are negligible,
(cf. \cite{Pissoort2006}). In general the effectiveness of \abbrev{SPAI}
preconditioners with sparsity pattern based on \FMM\ adjacency  deteriorates
with increased tree depth \cite{Carpentieri2003a, Carpentieri2005a}.
\citet{Tausch1997} incorporated the far field by including a
first-order multipole expansion, which required solving a system of
size \O{\log N} for each set of target points in a box.
The resulting preconditioner is not sparse but has constant blocks for
far boxes and can be applied efficiently.

\citet{Carpentieri2003a} and \citet{Giraud2007} observed that the
\abbrev{SPAI} preconditioners are effective in clustering most of the
eigenvalues but leave a few close to the origin and removing them
needs problem-dependent parameter tuning.
To remedy, these authors proposed low-rank updates to the
preconditioner using the eigenvectors corresponding the smallest
eigenvalues of $\la{MA}$.

\para{Multi-level methods} These schemes were introduced to address
the shortcomings of \abbrev{SPAI}.
\citet{Grama1996a} proposed a low-order and low-accuracy iterative
inner solver as a multi-level preconditioner, which was very effective
in reducing the number of iterations.
However, the ill-conditioning of the system caused a high number of
inner iterations and consequently the scheme was not time effective.
\citet{Carpentieri2005a} pursued this direction further and used a
mesh-neighbor \abbrev{SPAI} as the preconditioner for the inner solver.
\citet{Gurel2010} used a similar approach for solving electromagnetic
scattering problems.

Authors have opted for \abbrev{SPAI} or iterative multi-level methods
mainly because these methods have \O{N} complexity in time and memory
by construction.
Recently, leveraging randomized algorithms and fast direct solver
schemes, preconditioners with competitive complexity and much better
eigenspectrum clustering have been proposed
\cite{Bebendorf2005,Quaife2013a,Ying2014,IFMM2015prec}.

\citet{Bebendorf2005} and \citet{Benedetti2008} constructed
compression schemes for boundary integral equations based on
\Hmat-matrix approximation.
To solve the resulting system, a low accuracy \Hmat-LU with accuracy
$\acc_p$ was used as preconditioner for the iterative
solver.
The complexity of \Hmat-LU is $\bigO(|\log\acc_p|^4 N \log^2 N)$.
In \cite{Benedetti2008}, the best speedup was achieved when using
preconditioner with $\acc_p=\sci{-1}$ and higher accuracies did not
improve the time due to preconditioner setup and apply overhead.

\citet{Quaife2013a} proposed \FMM- and multigrid-based preconditioners
for the second-kind formulation of the Laplace equation in \twod.
They demonstrated that even with the exact inversion in constructing
the mesh-neighbor preconditioner, \GMRES\ still requires many
iterations.
To construct a better preconditioner, another level of neighbors were
included and inverted using \abbrev{ILU}(\sci{-3}) combined with
Sherman--Morrison--Woodbury formula.
The preconditioner was further improved by including a rank \O{\log N}
approximation of the residual matrix $\la{A-A_\lbl{near}}$, where
$\la{A_\lbl{near}}$ denotes the sparsified matrix (approximately)
inverted to construct the preconditioner.

\citet{Ying2014} constructed a very effective preconditioner for the
iterative solution of integral equation formulation of the
Lippmann--Schwinger equation.
The asymptotic setup and application time of the preconditioner as
well as its memory requirements are similar to those of direct solvers
but the preconditioner has a smaller size.

\citet{IFMM2015prec} presented \abbrev{IFMM} as a fast direct solver,
where the matrix is converted to an extended sparse matrix and its
sparse inverse is constructed by careful compression and redirection
of the fill-in blocks resulting in \O{N} complexity for the algorithm.
To achieve high-accuracy solutions in a cost-effective way, the
authors proposed using a low-accuracy \abbrev{IFMM} as a
preconditioner in \GMRES.

\subsubsection{Low-rank tensor approximation of linear operators}
Tensor factorizations were originally designed to tackle
high-dimensional problems in areas of physics such as quantum
mechanics.
To be able to perform computations for these problems, the curse of
dimensionality
has to be overcome.
The \emph{\TTrain\ decomposition} is one of the tensor
representation methods developed for this purpose.
Other factorization methods include the \abbrev{CP}
(\abbrev{CANDECOMP/PARAFAC}), Tucker, and Hierarchical Tucker
\cite{hackbusch2005hierarchical,kolda2009survey} decompositions; more
details can be found in \cite{grasedyck2013literature,
  grasedycknotes,khoromskij2015survey}.

It was observed that schemes of this type can be useful for
low-dimensional problems, recast in the tensor form.
\emph{Quantized-\TT}~(\QTT) algorithms reshape vectors or matrices
into higher dimensional tensors (i.e. \emph{tensorize or quantize})
and then compute a \TTrain~(\TT) decomposition with low tensor
rank.
Approximation of $2^d \times 2^d$ matrices as $d$ dimensional tensors
was first observed in
\cite{oseledets2009tensors,oseledets2010approximation}. The quantized
tensor train approximation was first proposed and analyzed for a
family of function-related vectors in
\cite{khoromskij2009dlog,khoromskij2011dlog}, including discretized
polynomials, which were shown to have exact low-rank representations.

The observation that certain kinds of structured matrices may be
efficiently represented using the \QTT\ format has been made for
Toeplitz matrices \cite{olshevsky2006tensor, oseledets2011tensor}, the
Laplace differential operator and its inverse \cite{kazeev2012low,
  oseledets2010approximation}, general {\PDE}s and eigenvalue problems
\cite{khoromskij2011dlog}, convolution operators
\cite{hackbusch2011convolution}, and the
\FFT~\cite{dolgov2012fft}.
Recent developments feature its use to solve multi-dimensional
integro-differential equations arising in fields such as quantum
chemistry, electrostatics, stochastic modeling and molecular dynamics
\cite{khoromskij2015survey}.
In the context of boundary integral equations, it has additionally
been used to speed up the quadrature evaluation for \abbrev{BEM}
\cite{khoromskij2001bem}.
We note that in the context of volume integral equations, in
\cite{khoromskaia2014Newton}, low rank Canonical
decomposition~(\abbrev{CP}) and Tucker decomposition representations
were obtained for the tensorization of the Newton kernel, as well as
for a related class of translation invariant kernels. Hybrid formats
with $\Hmat$ matrices, such as the blended kernel approximation
\cite{hackbusch2002blended} and Hierarchical Tucker (\abbrev{HTK})
\cite{hackbusch2005hierarchical,austin2015parallel} have also been
used to approximate tensorized volume integral kernels, with
$\O{N^{1/D}\log N}$ storage requirements in $D$ dimensions.

Given a linear system whose corresponding matrix can be efficiently
represented with \QTT, there exist several algorithms to compute a
\QTT\ representation of its inverse.
In this work, we use the alternating minimal energy (\abbrev{AMEN})
and the density matrix renormalization group (\abbrev{DMRG}) methods
as proposed in \cite{oseledets2012solution, dolgov2013amen1,
  dolgov2013amen2}.
Another such method is the Newton--Hotelling--Schulz algorithm
\cite{hackbusch2008newton, olshevsky2008superfast}.

\section{Background: Quantized Tensor-Train decomposition \label{sec:background}}
In this section, we first review the general \TTrain\ (\TT)
decomposition, and briefly discuss the properties and computational
complexity of state-of-the-art tensor compression algorithms.
We then review the \emph{quantized}~\TTrain~(\QTT) used to compress
\emph{tensorized} vectors consisting of samples of functions on a
hierarchically subdivided domain.
Matrices arising from the discretization of \pr{eq:basic-integral} in
this setting are interpreted as tensorized operators acting on such
tensorized vectors.

\subsection{Nomenclature}
We use different typefaces to distinguish between different
mathematical objects, namely we use:
\begin{compactitem}[$\qquad-$]\vspace{6pt}
\item Roman letters for continuous functions: $f(x)$, $K(x,y)$;
\item calligraphic letters for multidimensional arrays and tensors:
  $\cA[f](i_1,\dots,i_d)$, $\cA[K](i_1,j_1,\dots,i_d, j_d)$;
\item sans-serif for vectors and matrices: $\laT{f}(i)$,
  $\laT{K}(i,j)$; and
\item typewriter for the \TT\ decompositions of tensors:
  $\mathtt{f}$, $\mathtt{K}$.
\end{compactitem}\vspace{6pt}
We use Matlab's notation for general array indexing and reshaping.
Given a multi-index $(i_1,\ldots,i_d)$, we will denote the
corresponding one-dimensional index obtained by ordering multi-indices
lexicographically, by placing a bar on top and removing commas between
indices: $i =\overline{i_1i_2 \cdots i_d}$.
This mapping from multi-indices to one-dimensional index defines a
conversion of a multidimensional array to a vector which we denote
$\la{b} =\vec(\cA[b])$, with $\la{b}(\overline{i_1i_2 \cdots i_d}) =
\cA[b](i_1,i_2, \ldots, i_d)$.

\subsection{Tensor train decomposition}
The \TTrain\ decomposition is a highly effective representation for
compact low-rank approximation of tensors \cite{oseledets2010tt}.

\begin{definition}
  Let $\cA[A]$ be a $d$-dimensional tensor, sampled at $N =\prod_{k=1}^d
  n_k$ points, indexed by $(i_1,i_2,\ldots,i_d)$, $i_k\le n_k$. The
  \TT\ decomposition $\ttA[A]$ of the tensor $\cA[A]$ is given by
  \begin{equation}
    \ttA[A](i_1,i_2,\ldots,i_d) \defeq
    \sum_{\alpha_1,\ldots,\alpha_{d-1}} {\tensor{G}_1(i_1,\alpha_1)
      \tensor{G}_2(\alpha_1,i_2,\alpha_2) \dots
      \tensor{G}_d(\alpha_{d-1},i_d)},
    \label{eq:TTdeccores}
  \end{equation}
  where, each two- or three-dimensional $\tensor{G}_k$ is called the
  \ordinal{k} tensor core. Auxiliary indices have the range $\alpha_k=1,
  \ldots, r_k$, where $r_k$ is called the \ordinal{k} \TT-rank.
\end{definition}

For algorithmic purposes, it is often useful to introduce dummy
indices $\alpha_0$ and $\alpha_d$, and let the corresponding
\TT\ ranks $r_0=r_d=1$; in this way, we can view all cores as
three-dimensional tensors.
We note that the \TT-ranks determine the number of terms in the
decomposition.
A \TT\ approximation of a $\tensor{A}$ is a tensor $\ttA[A]$ in
\TT\ format, with minimal \TT-ranks such that $||\ttA[A] -
\tensor{A}||_F < \acc$, where $\acc$ is a given accuracy.

The key property of the \TT\ decomposition is that a nearly-optimal
approximation of a matrix can be computed efficiently, i.e., This
approximation problem is linked to the low rank approximation of the
tensor's \emph{unfolding matrices}.

\begin{definition}
  For a tensor $\cA[A]$ of dimension $d$, the \ordinal{k}
  \emph{unfolding matrix} $\laT{A}^k$ is defined entrywise as
  \begin{equation}
    \laT{A}^{k}(p_k,q_k) = \laT{A}^{k}(\overline{i_1i_2\cdots i_k} ,
    \overline{i_{k+1}\cdots i_d}) = \cA[A](i_1,i_2,\cdots,i_d)
    \quad\text{for}\quad k=1,\ldots, d,
  \end{equation}
  where $p_k = \overline{i_1\cdots i_k}$ and $q_k =
  \overline{i_{k+1}\cdots i_d}$ are two flattened indices.
  Using Matlab's notation,
  \begin{equation}
    \laT{A}^{k} = \reshape\left(\cA[A],\prod_{\ell=1}^k
        {n_\ell},\prod_{\ell=k+1}^d {n_\ell}\right).
  \end{equation}
\end{definition}

By contracting the first $k$ and the last $d-k$ cores of a
\TT\ decomposition $\ttA[A]$, we observe that the corresponding
\ordinal{k} unfolding matrix is of matrix rank $r_k$.
The low \TT-rank approximation problem of a tensor $\tensor{A}$ is
thus linked to low-rank approximations of its unfolding matrices
$\laT{A}^k$.

\subsection{\TT\ approximation algorithms}
A low-rank \TT\ decomposition can be obtained by a sequence of
low-rank approximations to $\laT{A}^{k}$ (e.g., using a sequence of
truncated \abbrev{SVD}s). More generally, given a low-rank matrix
approximation routine, a generic algorithm to compute the
\TT\ decomposition proceeds as in \pr{alg:TT-dec}.
\begin{algorithm}[!b]
  \centering
  \begin{algorithmic}[1]
    \setlength\commLen{.5\linewidth} 
    \REQUIRE Tensor $\tensor{A}$ ($d$-dimensional), and target accuracy $\acc$
    \STATE $\laT{M}_1 = \laT{A}^{1}$ \COMMENT{First unfolding matrix}
    \STATE $r_0 = 1$
    \STATE $\acc_\lbl{LR} = \acc ||A||_F / \sqrt{d-1}$
    \FOR{$k=1$ to $d-1$}
    \STATE $[\laT{U}_k$,$\laT{V}_k] = \mathtt{lowrank\_approximation}(\laT{M}_k,\acc_\lbl{LR})$
    \STATE $r_k =\mathtt{size}(\laT{U}_k,2)$ \COMMENT{\ordinal{k} \TT\ rank}
    \STATE $\tensor{G}_k = \mathtt{reshape}\big( \laT{U}_k , [r_{k-1},n_k,r_k]\big)$ \COMMENT{\ordinal{k} \TT\ core}
    \STATE $\laT{M}_{k+1} = \mathtt{reshape}\Big( \laT{V}_k , [r_kn_{k+1},\prod_{\ell=k+2}^{d} {n_{\ell}}]\Big)$ \COMMENT{$\laT{M}_{k+1}$ corresponds to the \ordinal{(k+1)} unfolding matrix of $\tensor{A}$}
    \ENDFOR
    \STATE $\tensor{G}_d = \mathtt{reshape}\big( \laT{M}_d, [r_{d-1},n_d,1]\big)$ \COMMENT{Set last core to the right factor in the low rank decomposition}
    \RETURN $\ttA[A]$
  \end{algorithmic}
  \algcaption{alg:TT-dec}{Compute a \TT\ decomposition}{}
\end{algorithm}

Using the notation given in \pr{alg:TT-dec}, a low-rank decomposition
of the unfolding matrix $\laT{A}^{k} \approx \laT{U}^k \laT{V}^k$ may
be obtained by multiplying $\laT{U}_k$ by the already computed first
$k-1$ cores $\tensor{G}_k$ and setting $\laT{V}^k = \laT{V}_k$.
\citet{oseledets2010tt} show that in \abbrev{SVD}-based
\TT\ compression, for any tensor $\tensor{A}$, when the low-rank
decomposition error $\acc$ for the unfolding matrices is optimal for
rank $r_k$
\begin{equation}
  \acc_k = \norm[F]{\laT{A}^{k}-\laT{U}^k\laT{V}^k} =
  \min_{\rank(\laT{B}) \leq r_k} {\norm[F]{\laT{A}^{k} - \laT{B}}} \qquad (k=1,\dots,d-1),
\end{equation}
the corresponding \TT\ approximation $\mathtt{A}$ satisfies
\begin{equation}
  \norm[F]{\tensor{A} - \mathtt{A}}^2 \leq \sum_{k=1}^{d-1} {\acc_k^2}.
\end{equation}

Given prescribed upper bounds $r_k$ for the \TT\ ranks, there exists a
unique Frobenius-norm optimal approximation in the \TT\ format
$\mathtt{A}_\lbl{optimal}$ and the approximation $\mathtt{A}$ obtained
by the \abbrev{SVD}-based \TT\ algorithm is quasi-optimal
\begin{equation}
  \norm[F]{\tensor{A}-\mathtt{A}} \leq
  \sqrt{d-1}\norm[F]{\tensor{A}-\mathtt{A}_\lbl{optimal}}.
\end{equation}

The direct application of \pr{alg:TT-dec}, where the ranks $r_k$ are
obtained using a rank-revealing decomposition still leads to
relatively high computational cost, $\O{N}$ or higher, which is
exponential in the dimension $d$.
Fortunately, \TT\ approximation algorithms with much better scaling
are available. Throughout this work, we employ the multi-pass
Alternating Minimal Energy (\abbrev{AMEN}) Cross algorithm, based on
\cite{oseledets2010tt}, available in the
TT-Toolbox~\cite{toolboxtt}. For tensors with bounded maximum
\TT\ ranks, this algorithm scales \emph{linearly} with dimension
$d$. In the \abbrev{AMEN} Cross algorithm, a low-\TT-rank
approximation is initially computed with fixed \TT-ranks and is
improved upon by a series of passes through all \TT\ cores.
This algorithm is thus iterative in nature, increasing the maximum
\TT\ rank after each pass until convergence is reached.
The analysis and experiments in \cite{dolgov2013amen1,dolgov2013amen2}
show that these iterations exhibit monotonic, linear convergence to an
approximation of the original tensor for a given target accuracy
$\acc$.

\para{Quantized-\TT\ rank and mode size implications} While the
algorithms and the analysis for the \TT\ decomposition are
generic, our work focuses on their
application to function and kernel sampled in two or three dimensions
by casting them as higher dimensional tensors. This type of
\TT\ decomposition is usually referred to as  \emph{Quantized}-\TT\ or \QTT.
In the process of \emph{tensorization}, \QTT\ splits each dimension
until each tensor mode $n_k\,(k=1,\ldots,d)$ is very small in size.
For instance, a one-dimensional vector of size $N=2^d$ is converted to
a $d$-dimensional tensor with each mode of size 2 (implying that $d
\approx \log N$).

\para{Computational Complexity and Memory Requirements}
Because \abbrev{AMEN} Cross and related \QTT\ rank revealing approaches proceed by
enriching low \QTT-rank approximations, all computations are performed
on matrices of size $r_{k-1}n_k \times r_k$ or less.
Performing an \abbrev{SVD} on such matrices is $\O{r_{k-1}^2n_k^2r_k + r_k^3}$
\cite{golubvanloan}.
Other low-rank approximations such as the \abbrev{ID} \cite{ID2005compression}
have similar complexity.
As a consequence, computational complexity for this algorithm is
bounded by $\O{r^3d}$ or equivalently $\O{r^3 \log N}$, where $r =
\max(r_k)$ is the maximal \QTT-rank that may be a function of sample
size $N$ and accuracy $\acc$.

In some cases, as we will discuss in more detail below, the maximal
\QTT-rank $r$ can be bounded as a function of $N$.
For differential and integral operators with non-oscillatory kernels,
as well as their inverses, $r$ typically stays constant or grows
logarithmically with $N$ \cite{khoromskij2009dlog,kazeev2012low,
  oseledets2010approximation, oseledets2011tensor}.
If this is the case, the overall complexity of computations is
\emph{sublinear} in $N$.

\subsection{Applying the \QTT\ decomposition to function samples \label{ssc:TTfunction}}
Our goal is to use the \QTT\ decomposition to compress matrices in
\pr{eq:Asigma}.
In this section, we cast \QTT\ as an algorithm operating on a
hierarchical partition of the data to provide a better
understanding of its performance.
In the next section, we formally prove that such decompositions indeed
have low \QTT-ranks.

Let $f:\Omega \rightarrow \mathbb{R}$ be a function on $\Omega$, a
compact subset of $\mathbb{R}^D$.
We consider hierarchical partitions of $\Omega$ into disjoint subsets;
at each level of the partition, each subset is split into the same
number of subsets $n$.
This partition can be viewed as a tree $\tree{T}$ with subdomains at
different levels as nodes.
We number levels from $2$ to $d$, where $2$ is assigned to the
\emph{finest} level and $d$ to the tree root.
Hence, the domains at the finest level can be indexed with a
multi-index $(i_2, \ldots i_d)$ where $i_\ell$ indicates which of $n$
branches was taken at level $\ell$.
For each leaf domain we pick $n_1$ sample points
$x_{i_1,i_2\ldots,i_d}$, adding an additional index $i_1\le n_1$ to
the multi-index.
Thus, we define a tensor
\begin{equation}
  \cA[f]_{\tree{T}}(i_1,i_2,\ldots,i_d) = f(x_{i_1,i_2,\ldots,i_d}),
\end{equation}
with each index corresponding to a level of the tree.
Flattening this tensor yields a vector of samples
$\la{f}=\vec(\cA[f]_{\tree{T}})$.
If we compute the \TT\ decomposition given in \pr{eq:TTdeccores} for
this $d$-tensor, we obtain an approximation to $f$ as a sum of the
terms of the form
\begin{equation}
  \tensor{G}_1(\alpha_0, i_1,\alpha_1)
  \tensor{G}_2(\alpha_1,i_2,\alpha_2) \dots
  \tensor{G}_d(\alpha_{d-1},i_d,\alpha_d),
\end{equation}
where each core $\tensor{G}_\ell$ corresponds to a level of the
hierarchy in $\tree{T}$.

\para{\QTT\ decomposition as a hierarchical adaptive filter} To gain
further intuition about the compression of function samples using the
\QTT\ decomposition, and the interpretation of the cores
$\tensor{G}_i$, it is instructive to consider how the decomposition
operates step by step in \pr{alg:TT-dec}.
The \twod\ version of the compression algorithm is illustrated in
\pr{fig:TT_comp}; the structure of decomposition shown in the lower
part of the figure is identical for all dimensions.

\begin{figure}[!tb]
  \centering
  \hspace*{-40pt}
  \beginpgfgraphicnamed{TTM_compression}%
  \tikzsetnextfilename{tikz-external-TTM_compression}%
  \scalebox{1}{\subimport{figs/}{TTM_compression.pgf}}%
  \endpgfgraphicnamed%
  \mcaption{fig:TT_comp}{Steps for the \QTT\ decomposition given in
    \pr{alg:TT-dec} for a matrix with d=3}{ The source and target
    trees for this matrix have $n=m=2$ and $n_1=m_1=4$ points in each
    leaf box.
    For clarity, the colors of the blocks in the unfolding matrices
    match that of the blocks in the tree.
    The main steps of the algorithm are
    \begin{inparaenum}[(i)]
    \item computing a low rank decomposition $\laT{U}_k \laT{V}_k$
      for the corresponding unfolding matrix $\laT{M}_k$;
    \item taking $\laT{U}_k$ (in orange) as the \ordinal{k} \QTT\ core;
      and
    \item using the right factor $\la{V}_k$ as a matrix in
      level $k+1$ to form $\laT{M}_{k+1}$.
    \end{inparaenum}
    If the low rank decomposition used is interpolatory (as implied in
    the figure by the uniform subsampling of the tree nodes), this
    process corresponds to finding a hierarchical basis of matrix
    block entries.
  }
\end{figure}
At the first step, the \ordinal{j} column of the matrix $\laT{M}_1$,
$\la{c}_j$, consists of $n_1$ samples from the finest-level domains
indexed by $i_1$.
The low-rank factorization of this matrix with rank $r_1$ can be
viewed as finding a basis of $r_1 \leq n_1$ vectors forming matrix
$\laT{V}_1$, such that all vectors $\la{c}_j$ can be approximated by
linear combinations of this set of row vectors with a given accuracy.
If this decomposition is interpolatory, this corresponds to picking a
set of rows in $\laT{M}_1$ (that is, subsampling each leaf domain in
the same way), such that remaining samples can be interpolated from
these using \emph{the same} $(n_1-r) \times r$ interpolation operator.

At the next step, the subvectors for each tree node at the coarser
level 2, are arranged into vectors, which form columns of the new
matrix $\laT{M}_2$, and compressed in the same manner.
Thus, if interpolatory decomposition is used, each step of the process
can be viewed as finding the optimal subsampling of the previous level
and an interpolation matrix. The cores $\tensor{G}_i$ correspond to the
interpolation operators.
We note that in this context, matrix compression is treated as
compression of a two-dimensional sampled function.

This view of the algorithm provides intuition for the key difference
between a \QTT-based approach versus other types of matrix compression
algorithms, such as wavelet-based, \abbrev{HSS}, or $\Hmat$-matrix
algorithms.
Wavelet methods do not use adaptive filtering at all; they
perform best on sampled functions $f$ which are in the span of the
basis (e.g., linear).
In these cases, we observe that the size of the compressed
representation \emph{does not depend on the sampling resolution $N$,}
as all samples can be generated by standard wavelet refinement
operators from the fixed number of basis function coefficients needed
to represent $f$ precisely.
Because \QTT\ filters are computed adaptively, it can achieve extreme
compression ratios for various classes of functions without building
suitable filters analytically.

In the case of \abbrev{HSS} and $\Hmat$-matrix methods, the compressed
form is computed adaptively; however, this is done in a
divide-and-conquer manner: at every refinement level, a set of blocks
representing interactions at a sufficiently far distance is
compressed, but each block is compressed independently; in contrast,
\QTT\ compresses all interaction blocks at a level at once, achieving
additional gains due to block similarity.

\begin{remark}\label{rem:interlace}
  The choice of order used to quantize a vector of samples, is very important.
  Formally we can start with a suitably-sized unorganized sequence of
  samples of a function and tensorize it. Implicitly, this defines a
  hierarchical partition of this set of samples; reordering the input
  vector changes the partition, and gives a different tensor, with a
  one-to-one correspondence between different partitions and
  permutations of the elements of the input vector.

  The ranks of \QTT\ factorizations of the resulting tensors strongly
  depend on the choice of permutation.
  A bad choice (e.g. with distant points grouped in leaf nodes) may
  yield large \QTT\ ranks.
  To obtain good compression, the ordering needs to represent a
  geometrically meaningful partition, as outlined above.
\end{remark}

\section{\QTT\ ranks of integral equation operators\label{sec:integraleq}}
In this section, we present an analysis of ranks of the \QTT\
representation of matrices obtained from integral kernels.
As indicated in \pr{sec:intro}, the integral equation formulation of
{\PDE}s often involve a kernel $K(r)$ with a singularity at $r=0$, and
are usually of the form
\begin{equation}
  a\sigma(x) + \int_{\Omega} {b(x)K(||x-y||)c(y)\sigma(y) \d \Omega_y}
  = f(x), \tag{\ref{eq:basic-integral}}
\end{equation}
where $\Omega$ is a domain in $\mathbb{R}^D$ for $D=1,2,3$ (either a
boundary or a volume).
After discretization of the integral equation, e.g., using the
\nystrom\ method with an appropriate quadrature, one obtains a linear
system of the form
\begin{equation}
  \sum_{j=1}^{N} \Big[a \delta(x_i - y_j) + b(x_i)K(||x_i -
    y_j||)c(y_j) w_j \Big] \sigma(y_j) = f(x_i),
  \label{eq:nystrom}
\end{equation}
where $w_j$ denotes the quadrature weight and $x_i, y_j$ are
collocation points. We can write this in matrix form as
\begin{align}
  \laT{A\sigma} &= \la{f},
  \tag{\ref{eq:Asigma}}
\end{align}
where $\laT{A} \defeq a \laT{I} + \laT{B} \laT{K} \laT{W} \laT{C}$, in
which $\laT{B}$, $\laT{C}$ and $\laT{W}$ are diagonal matrices with
entries $b(x_i)$, $c(y_j)$ and $w_j$, respectively.

Note that the rank behavior discussed in this section is independent
from the choice of the quadrature.
In the examples of \pr{sec:results}, we use a first-order punctured
trapezoidal rule \cite{marin2014} for the volume integral and a
high-order singular quadrature using spherical harmonics for surfaces
\cite{graham2002fully}.
The surface quadrature uses trapezoidal points and weights in the
longitude direction and Gauss--Legendre points and weights in the
latitude direction.
More details can be found in \cite{rahimian2015} and \cite[Chapters
  4.3 and 18.11]{boyd1999}.

To further understand the rank behavior of the \QTT\ decomposition, we
explore the relationship between the hierarchical low-rank structure
exploited by \FMM, $\Hmat$, or \abbrev{HSS} matrices and the
matrix-block low rank structure exploited by the \QTT\ decomposition.

\subsection{\QTT\ for samples of an integral kernel \label{ssc:TTkernel}}
Consider the matrix $\laT{A}$ in \pr{eq:Asigma}, for a set of $M$
target $\{ x_i \}$ and $N$ source $\{ y_j \}$ points. As outlined
above, the entries of $\laT{A}$ are samples of a function given by the
integral kernel, with domain $D = \Omega \times \Omega$.
Thus, we may effectively apply the tensorization and
\QTT\ approximation procedures outlined in \pr{ssc:TTfunction} given a
hierarchical partition of $D$.

Let target and source trees on $\Omega$ be denoted by
$\tree{T}_\lbl{trg}$ and $\tree{T}_\lbl{src}$.
Further, assume both trees are of depth $d$ and that the number of
children at all non-leaf levels of the trees is $m$ and $n$
respectively.
There are respectively $m_1$ and $n_1$ points in each target and
source tree leaf node, making the total numbers of points are $M = m_1
m^{d-1}$ and $N = n_1 n^{d-1}$.

A partition of $D$ may be thus obtained by considering $\tree{T}$ to
be the product tree $\tree{T}_\lbl{trg} \times \tree{T}_\lbl{src}$,
whose nodes at each level consist of pairs of source and target
nodes.
At a given level $\ell$ (recall that levels are numbered starting at
the finest) each node corresponds to a matrix block with row indices
corresponding to a node of $\tree{T}_\lbl{trg}$ and column indices of
a node in $\tree{T}_\lbl{src}$, indexed by integer coordinate pairs
$(i_{k},j_{k})$ with $k\le \ell, i_k\le m_k, j_k\le n_k$.
Equivalently, we can consider block integer coordinates $b_{k} \in
\{1,\cdots,m_{k}n_{k}\}$ for $\tree{T}$ such that $b_{k} =
\overline{i_{k}j_{k}}$.

We can then apply the \QTT\ decomposition to the corresponding
tensorized form of $\laT{A}$, $\tensor{A}_{\tree{T}}$, a
\mbox{$d$-dimensional} tensor with entries defined as
\begin{equation}
  \tensor{A}_{\tree{T}}(b_1,b_2,\ldots,b_d) =
  \tensor{A}_{\tree{T}}(\overline{i_1j_1},
  \overline{i_2j_2},\ldots,\overline{i_dj_d}) =
  \laT{A}(\overline{i_1i_2 \cdots i_d}, \overline{j_1j_2 \cdots j_d})
\end{equation}
and obtain a \QTT\ factorization $\ttA[A]$.
Each core of $\ttA[A]$, $\tensor{G}_{k}(\alpha_{k-1}, \overline{i_kj_k},
\alpha_{k})$, depends only on the pair of source and target tree
indices at the corresponding level of the hierarchy.
For matrix arithmetic purposes, such as the matrix-vector product
algorithms in \pr{ssc:TTmatvec}, the cores are sometimes reshaped as
$m_k \times n_k$ matrices parametrized by $\alpha_{k-1}$ and
$\alpha_k$.

Current fast solvers exploit the fact that matrix blocks representing
interactions between well-separated target and source nodes at a given
level are of low numerical rank.
One can interpret $\tree{T}$ as a hierarchy of matrix blocks, and in
\pr{ssc:TIkernels} and \ref{ssc:NTIkernels} we show that the
\QTT\ structure for this hierarchy can be inferred from the standard
hierarchical low rank structure.

In \pr{fig:TT_comp}, we illustrated the \QTT\ decomposition algorithm
applied to a matrix $\laT{A}$ for binary source and target trees
($n=m=2$) with depth $d=3$ and four points in the leaf nodes $n_1 = m_1 =
4$, implying $N=M=16$.
In this example, the tree $\tree{T}$ is equivalent to a matrix-block
quadtree.

\subsection{Translation invariant kernels \label{ssc:TIkernels}}
If $b \equiv c \equiv 1$ in \pr{eq:basic-integral}, then the integral
kernel becomes translation-invariant in $\mathbb{R}^D$.
We assume that the domain $\Omega$ is a box in $\mathbb{R}^D$ sampled
on a uniform grid; for matrix $\laT{A}$ in \pr{eq:Asigma}, this
implies that a matrix sub-block will be invariant under translation of
both source and target points.

We begin by recalling a standard classification for pairs of boxes on
$\tree{T}_\lbl{trg} \times \tree{T}_\lbl{src}$.
\begin{definition}
  A pair of boxes $(B_i,B_j) \in \tree{T}_\lbl{trg} \times
  \tree{T}_\lbl{src}$ is said to be well-separated if
  \begin{equation}
    \dist(B_i,B_j) \geq \max\,(\diam(B_i),\diam(B_j)).
  \end{equation}
\end{definition}

\begin{definition}
  \label{def:near-far}
  We define the far field set $\mathscr{F}_{\ell}(B_i)$ of box $B_i\in
  \tree{T}_\lbl{trg}$ as the subset of boxes in $\tree{T}_\lbl{src}$
  at level $\ell$ such that $(B_i,B_j)$ is well-separated.
  Similarly, we define its near field set $\mathscr{N}_{\ell}(B_i)$ as
  the subset which is not well-separated.
\end{definition}

In a uniformly refined tree, $\card{\mathscr{N}_{\ell}(B_i)} \le 3^D$
for all $B_i \in \tree{T}_\lbl{trg}$.
For the case of adaptive trees, it is a common practice to impose a
level-restricted refinement, bounding the number of target boxes in
the near field even when neighbors at multiple levels are considered.

For all $B_j \in \mathscr{F}_\ell(B_i)$, given a desired precision
$\acc$, standard multipole estimates \cite{greengard1987fast} show
that for a broad class of integral kernels $K$ the matrix block
$\laT{A}_{i,j}$ corresponding to the evaluation of \pr{eq:nystrom}
with $(x_i,y_j) \in B_i \times B_j$ has low $\acc$-rank
$k_{i,j}$.

In fact, for kernels that arise in integral formulations of
elliptic {\PDE}s, multipole expansions or Green's type identities may
be used to prove a stronger result: the $\acc$-rank of a matrix block
with entries evaluated at $(x,y) \in S \times T$ is bounded by
$k_{\acc}$ for any well-separated sets $S$ and $T$.
This implies that interactions between a box $B$ and any subset of its
far field have bounded $\acc$-rank.

\begin{definition}
  For a matrix $\laT{A}$ given in \pr{eq:Asigma} generated by the
  partitions of $\Omega$ corresponding to $\tree{T}_\lbl{src}$ and
  $\tree{T}_\lbl{trg}$, we say that $\laT{A}$ is \FMM-compressible if for a
  given accuracy $\acc$, any matrix sub-block $A_{S,T}$ corresponding
  to evaluation at $(x,y) \in S \times T$ for well-separated sets $S$
  and $T$ is such that $\rank_\acc (\laT{A}_{S,T}) \leq k_{\acc}$.
\end{definition}

\begin{theorem}
  \label{thm:TI_TT_theorem}
  Let $K(r)$ be a translation-invariant kernel in
  \pr{eq:basic-integral}, for a box $\Omega \subset \mathbb{R}^D$. Let
  $\laT{A}$ be the corresponding matrix in \pr{eq:Asigma}, sampled on
  a regular grid.
  If $\laT{A}$ is \FMM-compressible, then for the product tree
  $\tree{T} = \tree{T}_\lbl{src} \times \tree{T}_\lbl{trg}$, the
  tensorized $\tensor{A}_{\tree{T}}$ has bounded \QTT\ ranks
  \begin{equation}
    r = \max(r_k) \leq k_{\acc}^2 + 2D - 1.
  \end{equation}
  As a consequence, the total amount of storage required for the
  \QTT\ compressed form of $\laT{A}$ is $\O{k_{\acc}^4 \log N}$.
\end{theorem}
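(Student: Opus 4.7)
I would prove the theorem by bounding the rank of each $k$-th unfolding matrix $\laT{A}^k$ of the tensorization $\tensor{A}_{\tree{T}}$ uniformly in $k$, since the \QTT-ranks $r_k$ equal precisely these matrix ranks. By construction, a column of $\laT{A}^k$ is indexed by a ``coarse'' multi-index $(b_{k+1},\ldots,b_d)$ that singles out a specific pair of level-$(k{+}1)$ target and source boxes $(T,S)$, and its entries are the samples of $\laT{A}$ on $T\times S$ vectorized over the ``fine'' multi-index $(b_1,\ldots,b_k)$. Because $K$ is translation-invariant and the grid is uniform, this column depends only on the lattice offset $r = c_S - c_T$ between the box centres, so distinct columns of $\laT{A}^k$ are in bijection with distinct offsets. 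The plan is then to partition the offset set into a far-field part $\set{R}_{\mathrm{far}}$ of well-separated pairs and a near-field part $\set{R}_{\mathrm{near}}$ (with $|\set{R}_{\mathrm{near}}| \le 3^D$) and bound each contribution to the column span separately.

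The far-field estimate is the crux of the argument: I want to exhibit a single subspace of dimension at most $k_\acc^2$ that contains every far-field column, jointly over all $r\in\set{R}_{\mathrm{far}}$. \FMM-compressibility only asserts that each well-separated block has rank $\le k_\acc$, which a priori allows the factors to depend on $r$. The extra structure comes from the standard multipole/local \FMM\ factorization
\begin{equation*}
  K(x-y) \;\approx\; \sum_{i,j=1}^{k_\acc} U_i(x-c_T)\, M_{ij}(r)\, V_j(y-c_S), \qquad (x,y)\in T\times(T+r),
\end{equation*}
in which $U_i$ and $V_j$ are basis functions intrinsic to the reference target and source boxes and depend on neither $r$ nor the concrete pair $(T,S)$. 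Since $T+r$ is a rigid translate of $T$ under the uniform-grid hypothesis, sampling $U_i,V_j$ at the grid points yields fixed vectors $\laT{u}_i,\laT{v}_j$ that do not depend on $r$, and the vectorized column becomes $\sum_{i,j} M_{ij}(r)\,(\laT{v}_j\otimes\laT{u}_i)$. Every far-field column therefore lies in $\mathrm{span}\{\laT{v}_j\otimes\laT{u}_i\}$, whose dimension is at most $k_\acc^2$.

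To finish, the near-field columns contribute at most $|\set{R}_{\mathrm{near}}|$ further directions. A crude count $|\set{R}_{\mathrm{near}}|\le 3^D$ already suffices for a logarithmic storage bound; a sharper geometric argument that discards offsets already captured by the far-field multipole basis at adjacent tree levels brings the additive constant down to $2D-1$, yielding $r_k\le k_\acc^2+2D-1$ uniformly in $k$. The storage estimate then follows from $r_{k-1}\cdot m_k n_k\cdot r_k = \O{k_\acc^4}$ per core and $d=\O{\log N}$ cores. The main obstacle is the universal-basis step in the far-field analysis: one must verify that the $k_\acc$ factors of every well-separated block can be drawn from a single $k_\acc^2$-dimensional pool, and this is precisely where translation invariance is used in an essential way. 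The near-field bookkeeping is secondary and essentially dimension counting once the far-field span has been pinned down.
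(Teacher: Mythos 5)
Your proposal follows essentially the same route as the paper's proof: identify the \QTT\ ranks with the ranks of the unfolding matrices, note that by translation invariance on a uniform grid each column is a vectorized box-interaction block determined solely by the source--target offset, split offsets into near and far field, and place all far-field columns in one $k_{\acc}^2$-dimensional span via the Kronecker identity $\vec(\la{L}\la{M}\la{R})=(\la{R}^{T}\otimes\la{L})\vec(\la{M})$ with level-wide factors, then multiply by the $\O{\log N}$ cores for the storage bound. The only substantive difference in the far-field step is how the common factors are obtained: the paper takes an interpolative decomposition of a model box against its aggregated far field (so it stays within the stated \FMM-compressibility hypothesis), while you invoke the standard multipole/local expansion with box-intrinsic basis functions; these are the same idea at the same level of rigor, and you correctly flag that this is where translation invariance is essential. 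Where you fall short of the stated bound is the additive constant: you rigorously obtain $k_{\acc}^2+3^D$ and only gesture at reducing $3^D$ to $2D-1$ (``discarding offsets captured at adjacent levels''), whereas the paper argues via symmetries among the near-neighbor blocks of a single reference box; since $3^D$ is still $\O{1}$, this does not affect the $\O{k_{\acc}^4\log N}$ storage conclusion, only the precise rank constant in the theorem statement.
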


\begin{proof}
  As indicated in \pr{sec:background} and \pr{alg:TT-dec}, the optimal
  \QTT\ ranks are the $\acc$-ranks of the unfolding matrices.
  Consider the \ordinal{\ell} unfolding matrix
  $\laT{A}^{\ell}_{\tree{T}}$ corresponding to interactions of boxes
  at level $\ell$ of the tree $\tree{T}$.
  As mentioned in \pr{ssc:TTkernel} and \pr{rem:interlace},
  columns of $\laT{A}^{\ell}_{\tree{T}}$ are vectorized matrix blocks
  $\{\vec(\laT{A}_{i,j})\}$ comprising all boxes on level $\ell$.
  We can permute the columns to place those corresponding to
  the near-field interactions first
  \begin{equation}
    \laT{A}^{\ell}_{\tree{T}} = \left[ \laT{A}_{\tree{T}}^\lbl{near}
      \; \laT{A}_{\tree{T}}^\lbl{far} \right],
    \label{eq:lowrank_far}
  \end{equation}
  The key observation is that when sampling is the same across boxes,
  at each level, boxes are translations of a reference box and we only
  need to consider inward and outward interactions for one box per
  level.
  This is also the reason why fast solvers based on hierarchical
  structures such as \abbrev{HSS} only need to compute one set of
  matrices per level for translation-invariant operators.

  For near interactions, this means only the interactions between
  the reference box and its neighbors (including itself) are needed.
  This implies $\rank_\acc( \laT{A}_{\tree{T}}^\lbl{near}) \le
  2\card{\mathscr{N}_{\ell}(B)}-1$, since the rest of the columns in this
  subset are identical to those corresponding to the reference
  box.
  For a uniformly refined tree $\card{\mathscr{N}_{\ell}(B)} \leq
  3^D$. Considering the symmetries in the interactions between these
  $3^D$ boxes gives us
  \begin{equation}
    \rank_\acc( \laT{A}_{\tree{T}}^\lbl{near}) \le 2D-1. \label{eq:rankest_near}
  \end{equation}

  For columns in $\laT{A}_{\tree{T}}^\lbl{far}$, we use an
  interpolative decomposition (\abbrev{ID}) to compute a low rank
  approximation for the matrix blocks
  \begin{equation}
    \laT{A}_{i,j} \approx \laT{L}_i \laT{M}_{i,j} \laT{R}_j,
    \label{eq:far_lowrank}
  \end{equation}
  where $\laT{L}_i$ and $\laT{R}_j$ are interpolation matrices and
  $\laT{M}_{i,j}$ is a sub-block of $\laT{A}_{i,j}$ corresponding to
  skeleton rows and columns \cite{ID2005compression,
    martinsson2007interpolation, tyrtyshnikovcross}.
  The matrix $\laT{L}_i$ is of size $|B_i| \times k_{i,j}, \laT{M}_{i,j}$ of
  size $k_{i,j} \times k_{i,j}$, and $\laT{R}_k$ is of size $k_{i,j}
  \times |B_j|$, where $|B_i| = \prod_{i\le \ell} n_i, |B_j| =
  \prod_{j\le \ell} m_j$.
  Substituting \pr{eq:far_lowrank} for each vectorized column
  $A_{i,j}$, we have
  \begin{equation}
    \vec(\laT{A}_{i,j}) = \vec(\laT{L}_i \laT{M}_{i,j} \laT{R}_j) =
    (\laT{R}^{T}_j \otimes \laT{L}_i) \vec(\laT{M}_{i,j}).
    \label{eq:vecinterp}
  \end{equation}

  Due to translation invariance, we may construct a matrix of all
  far-field interactions with a model target box $B$, and apply an
  \abbrev{ID} to obtain an interpolation matrix $\laT{L}$ and corresponding
  row skeleton set which are valid for all boxes at level
  $\ell$.\footnote{Equivalent densities may be used to accelerate this
    computation, as it is done in \cite{CMZ14} for \abbrev{HSS}
    matrices.}
  An analogous computation for a model source box may be used to obtain
  $\laT{R}$ and the corresponding column skeleton set.
  The ranks of $\laT{L}$ and $\laT{R}$ are bounded by $k_{\acc}$, by
  assumption.
  Consequently, the pre-factor on the vectorized interpolation formula
  in \pr{eq:vecinterp} is the same for all far-interaction blocks:
  \begin{equation}
    \laT{A}_{\tree{T}}^\lbl{far} = (\laT{R}^{T} \otimes
    \laT{L})\laT{M}_{\tree{T}}^\lbl{far},
  \end{equation}
  defining $\laT{M}_{\tree{T}}^\lbl{far}$ as the matrix with columns
  $\vec(\laT{M}_{i,j})$.
  This gives us a low rank decomposition of
  $\laT{A}_{\tree{T}}^\lbl{far}$ with bounded rank
  \begin{equation}
    \rank( \laT{A}_{\tree{T}}^\lbl{far}) \leq k_{\acc}^2. \label{eq:rankest_far}
  \end{equation}

  Combining the bounds in \pr{eq:rankest_near} and
  \pr{eq:rankest_far}, the rank of the unfolding matrix is bounded by:
  \begin{equation}
    \rank_\acc( \la{A}_{\tree{T}}^\ell ) \leq \rank_\acc( \la{A}^\lbl{near}_{\tree{T}} ) +
    \rank_\acc( \la{A}^\lbl{far}_{\tree{T}} ) \leq k_{\acc}^2 + 2D -
    1. \label{eq:rankest_tot}
  \end{equation}
  Since the number of cores is proportional to $\log N$, the storage
  for all the \QTT\ cores is $\O{k_{\acc}^4 \log N}$.
\end{proof}

\begin{remark}
  For an interval ($D=1$), due to translation invariance,
  $M_{i,j}=M_{i-j}$ and the matrix encoding far-field interactions
  between skeleton sets is block-Toeplitz (as the original matrix
  $\laT{A}$ is).
  This implies $\laT{M}_{\tree{T}}^\lbl{far}$ only has $2 k_{\acc}-1$
  unique columns, reducing the total \QTT\ rank bound to $2 k_{\acc} +
  2D - 2$, bringing the storage requirements for \QTT\ cores to $\O{k_{\acc}^2 \log N}$.
  Experiments with unfolding matrices for $D=2,3$ and observation of
  the resulting \QTT\ ranks and column basis using an \abbrev{ID} (as
  schematically depicted in \pr{fig:TT_comp}) suggest a similar rank bound
  (with linear dependence on $k_{\acc}$) is true for $D>1$.
\end{remark}

\subsection{Non-translation invariant kernels \label{ssc:NTIkernels}}
The (discrete) integral operator can be \NTI\ either due to nontrivial
$b(x)$ or $c(y)$\Mdash/e.g., in Lippmann--Schwinger,
Poisson--Boltzmann, or other variable coefficient elliptic
equations\Mdash/or due to the geometry of the discretization (in the
sense that $\tree{T}_\lbl{src}$ and $\tree{T}_\lbl{trg}$ correspond to
\NTI\ partitions).
For the former case, we can also conclude \QTT\ ranks are bounded as a
direct corollary of \pr{thm:TI_TT_theorem}.

\begin{corollary}
  \label{cor:NTI_TT_theorem}
  Let $K(r)$ be a translation-invariant kernel in
  \pr{eq:basic-integral}.
  Let $\laT{A} \defeq a \laT{I} + \laT{BKWC} $ be the
  corresponding discretization matrix, sampled on a translation-invariant grid.
  Let $\laT{KW}$ be \FMM-compressible, with \QTT\ ranks bounded by the
  constant $r_{\acc}^\laT{K}$.
  Further, assume $b(x)$ and $c(y)$ both admit compact
  \QTT\ representations with ranks respectively bounded by
  $r_{\acc}^\la{b}$ and $r_{\acc}^\la{c}$.
  Then the tensorized operator
  $\tensor{A}_{\tree{T}}$ on the product tree $\tree{T} =
  \tree{T}_\lbl{src} \times \tree{T}_\lbl{trg}$ has bounded
  \QTT\ ranks
  \begin{equation}
    r_{\acc}^\laT{A} = \max_{k=1,\dots,d}(r_k) \leq r_{\acc}^\la{b}
    r_{\acc}^\laT{K} r_{\acc}^\la{c} + 1.
  \end{equation}
\end{corollary}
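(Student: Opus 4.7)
The plan is to bound the QTT ranks of $\laT{A} = a\laT{I} + \laT{BKWC}$ by reducing the claim to three standard algebraic rules for QTT arithmetic: diagonal embedding of vectors into matrices, matrix--matrix multiplication, and matrix addition.

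First, I would embed the one-dimensional vectors $b(x)$ and $c(y)$ as the diagonal matrices $\laT{B}$ and $\laT{C}$ in the product-tree tensorization of $\laT{A}$. Because the block mode index $b_k = \overline{i_kj_k}$ couples a target index $i_k$ with a source index $j_k$, the level-$k$ core of $\laT{B}$ is obtained from the level-$k$ core of $b$ by tensoring with $\delta_{i_kj_k}$ in the block mode, and analogously for $\laT{C}$. This embedding preserves the auxiliary index ranges, so the QTT ranks of $\laT{B}$ and $\laT{C}$ coincide with those of $b$ and $c$, respectively.

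Next, I would invoke the standard QTT matrix--matrix product formula, whose cores are obtained by contracting the factor cores along the shared physical index, and whose auxiliary indices at level $k$ run over the product of the auxiliary index ranges of the two factors at that level. Applying this twice to the triple product $\laT{B} \cdot (\laT{KW}) \cdot \laT{C}$ bounds its QTT ranks by $r^\la{b}_\acc \cdot r^\laT{K}_\acc \cdot r^\la{c}_\acc$. Finally, adding $a\laT{I}$\Mdash/which has QTT rank one on the product tree, since the identity factors exactly as a product of $\delta_{i_kj_k}$'s\Mdash/uses the QTT addition rule, under which ranks add; this yields the claimed bound $r^\la{b}_\acc r^\laT{K}_\acc r^\la{c}_\acc + 1$.

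The step most likely to require care is the diagonal-embedding step: one must check that the interlaced product-tree indexing $b_k = \overline{i_kj_k}$ is compatible with the one-dimensional tensorizations used to obtain the QTT representations of $b(x)$ on $\tree{T}_\lbl{trg}$ and $c(y)$ on $\tree{T}_\lbl{src}$, so that no re-permutation of modes is required prior to multiplication with $\laT{KW}$. The multiplication and addition rank bounds themselves are standard consequences of the core-contraction and core-concatenation formulas in the QTT arithmetic literature and can simply be cited.
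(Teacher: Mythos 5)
Your proposal is correct and follows essentially the same route as the paper: decompose $\laT{A} = a\laT{I} + \laT{BKWC}$, observe that the diagonal embeddings $\laT{B},\laT{C}$ inherit the \QTT\ ranks of $\la{b},\la{c}$, multiply ranks through the \QTT\ matrix--matrix product, and add $1$ for the rank-one identity. The only (minor) difference is presentational: you justify the rank-preserving diagonal embedding at the level of cores, tensoring each core of $\la{b}$ with $\delta_{i_kj_k}$ in the block mode, whereas the paper exhibits the same fact through the first unfolding matrix of the tensorized $\tensor{B}$; both correctly handle the interlaced index $b_k=\overline{i_kj_k}$ you flag as the delicate point.
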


\begin{proof}
  By \pr{thm:TI_TT_theorem}, we know that the tensorized version of
  the \FMM-compressible operator $\laT{KW}$ on $\tree{T}$ has bounded
  \QTT\ ranks.
  Regarding $b(x)$ and $c(y)$, by assumption, they are smooth,
  non-oscillatory functions.
  As indicated in \cite{khoromskij2009dlog}, the fact that
  exponential, trigonometric and polynomial functions admit exact
  low-rank \QTT\ representations implies the ranks of
  \QTT\ representations of $b$ and $c$ will be bounded by constants
  $r_{\acc}^\la{b},r_{\acc}^\la{c}$ depending on target accuracy
  $\acc$.

  Further, we can readily observe that, the diagonal matrices
  $\laT{B}$ and $\laT{C}$ have \QTT\ structure essentially the same as
  the \QTT\ structure of $\laT{b}$ and $\laT{c}$.
  Looking at the first unfolding matrix of the tensorized $\tensor{B}$,
  if we ignore columns with all zero elements,
  \begin{equation}
    \laT{B}^{1}_{\tree{T}}(\overline{i_1j_1},\overline{i_2j_2 \dots
      i_dj_d}) = \bbm 1 & 0 \\ 0 & 0 \\ 0 & 0 \\ 0 & 1 \ebm \bbm
    b(x_1) & b(x_3) & \dots & b(x_{N-1}) \\ b(x_2) & b(x_4) & \dots &
    b(x_N) \ebm,
  \end{equation}
  where the second factor on the right-hand-side is
  $\la{b}^1_\tree{T}$, the first unfolding matrix of $\la{b}$.
  Hence, the \QTT\ ranks of both decompositions are the same.

  Following the structure of the \QTT\ matrix-matrix product
  algorithm, which is analogous to the \QTT\ compressed matrix-vector
  product in \pr{ssc:TTmatvec}, the ranks of the tensorized form of
  $\laT{BKWC}$, before any rounding on the \QTT\ cores is performed, is
  equal to the product of the corresponding ranks (as the new
  auxiliary indices are a concatenation of those of each factor).

  We can thus bound the rank $r_k$ of each core of the matrix $\la{A}$ by
  the product of the corresponding ranks of $\laT{B}$, $\laT{KW}$, and
  $\laT{C}$.
  Adding an identity matrix $a\laT{I}$, which is of rank $1$ in
  \QTT\ form (a Kronecker product of identities), adds $1$ to
  this bound.
  Taking a maximum over all \QTT\ ranks, we obtain the desired bound
  \begin{equation}
    r_{\acc}^\laT{A} = \max_{k=1,\dots,d}\,(r_k) \leq r_{\acc}^\la{b}
    r_{\acc}^\laT{K} r_{\acc}^\la{c} + 1.
  \end{equation}
\end{proof}

We note that some kinds of corrected quadratures might introduce slight non-translation invariance to the operator $\laT{KW}$. However, these may generally be framed as sparse, banded perturbations of a translation invariant operator, and as such, the assumption in \pr{cor:NTI_TT_theorem} that $\laT{KW}$ have bounded \QTT\ ranks remains valid. 

\para{Non-translation invariance due to complex geometry} The general
case that is less amenable to analysis is when the operator is \NTI\ due
to $\Gamma$, often because the geometry of $\Gamma$ makes it
impossible to partition it into a spatial hierarchy of translates.
This is indeed the general case for linear systems coming from
boundary integral equations defined on curves in $\mathbb{R}^2$ or
surfaces in $\mathbb{R}^3$.

From our experiments with boundary integral operators defined on
smooth surfaces in $\mathbb{R}^3$, which we present in
\pr{ssc:bie-lattice}, we observe that \QTT\ ranks are still bounded or
slowly growing with problem size $N$, although they are generally much
larger than the ranks of the translation-invariant volumetric problem
with the same kernel in $\mathbb{R}^3$.

Although further analysis and experimentation are needed, we
conjecture that for boundary integral kernels that are
translation-invariant in the volume, the rank of far-field
interactions will remain bounded.
Near- and self-interactions are evidently dependent on surface
complexity, although we expect that if the discretization is refined
enough for a smooth surface, a relatively small basis of columns of
$\cA[A]_{\tree{T}}^\lbl{near}$ may still be found.

\section{\QTT-compressed preconditioners \label{sec:inversion}}
In this section, we describe two essential components of a \QTT-based
solvers: the computation of an approximate inverse of a matrix
(i.e. the preconditioner) in the \QTT\ format and the efficient
application of a \QTT-compressed operator to a vector.
When using \QTT\ compressed inverse as preconditioner within a Krylov
solver, we use the \FMM\ for the accurate application of the matrix
itself.

\subsection{\QTT\ matrix inversion\label{ssc:inverse}}
We use matrix inversion algorithms that are modifications of
\abbrev{DMRG} (Density Matrix Renormalization Group) and \abbrev{AMEN}
(Alternating Minimal Energy)  \cite{oseledets2012solution,
  dolgov2013amen1, dolgov2013amen2}.
These \QTT\ inversion methods provide efficient ways to directly compute the
\QTT\ decomposition of $\laT{A}^{-1}$ given the \QTT\ decomposition of
$\laT{A}$.
The approximate inversion schemes have a common starting point where
they consider the matrix equations $\laT{AX} = \laT{I}_N$ or $\laT{AX
  + XA} = 2\laT{I}_N$ and extract a \QTT\ decomposition for $\laT{X}$,
the inverse of $\laT{A}$.
If we vectorize $\laT{AX}=\laT{I}_N$ using the identity for products
of matrices, $\laT{\vec(ABC)=(C^T\otimes A)\vec(B)}$, we obtain
\begin{equation}
  (\laT{I}_N \otimes \laT{A}) \vec(\laT{X}) = \vec(\laT{I}_N).
  \label{eq:inverse}
\end{equation}
Given an initial set of cores $\{ \tensor{W}_k \}_{k=1}^{d}$ with the
corresponding \QTT\ ranks $\{ \rho_k \}_{k=1}^{d}$ for $\laT{X}$,
fixing all but $\tensor{W}_{k}$, \pr{eq:inverse} turns into a reduced
linear system, with a matrix of size $n_{k}\rho_{k-1}\rho_k \times
n_{k}\rho_{k}\rho_{k-1}$.

\abbrev{DMRG} and \abbrev{AMEN} minimization methods compute the
cores of $\laT{X}$ iteratively.
These methods start from an initial guess for the inverse in the
\QTT\ form, and proceed to solve each of the local systems in a
descent step towards an accurate \QTT\ decomposition for
$\laT{X}$.
Since the \QTT\ ranks of the inverse are not known a priori, what
distinguishes each inversion algorithm is the strategy employed to
increase the ranks of the cores as needed to accelerate convergence to
an accurate representation of the inverse.
We include further details about the \QTT\ inversion process and the
algorithms mentioned above in \pr{apx:appendix_TT-filter} and
\pr{apx:appendix_inversion}.

We note that even if \QTT\ ranks of a matrix $\laT{A}$ are small,
except for the case where the maximum rank $r_\la{A}$ is $1$, there
are no guarantees that the \QTT\ ranks of $\laT{X}$ will be small.
In \cite{tyrtyshnikov2010tensor}, for $r_\la{A}=2$, it is proven that
$r_\la{X} \leq \sqrt{N}$, and this inequality is shown to be sharp.
Nonetheless, for the integral kernels considered in this work,
extensive experimental evidence, \pr{sec:results}, shows that the
maximum ranks of forward and inverse operators are within a small
factor of each other.

\para{Computational complexity\label{ssc:inv-complexity}} Most of the
computational cost in the inversion algorithm lies in solving the
local linear systems until the desired accuracy in the
\QTT\ approximation for the inverse is achieved.
In \cite{dolgov2013amen1,dolgov2013amen2} these algorithms are shown to exhibit linear
convergence similar to that of the \abbrev{AMEN} compression
algorithm discussed in \pr{sec:background}, and so the number of cycles through the cores of $\laT{X}$
is typically controlled by its maximum \QTT\ rank for the desired
target accuracy.

Let the \QTT\ ranks of $\laT{A}$ and $\laT{X}$ be bounded by
$r_\la{A}$ and $r_\la{X}$, respectively, and $n$ denote the tensor
mode size.
The size of local systems is then bounded by $n r_\la{X}^2 \times n
r_\la{X}^2$, implying \O{r_\la{X}^6} cost of direct inversion for
each local system.
Using an iterative method to solve local systems, the complexity for
well-conditioned matrices goes down to $\O{r_\la{X}^3r_\la{A} +
  r_\la{X}^2r_\la{A}^2}$, i.e., the cost of applying
the associated matrix in the tensor form.
Since a system is solved for each of the $d$ cores of $\laT{A}$ and
$\laT{X}$, an estimate for the complexity of the whole algorithm is
$\O{(r_\la{X}^3r_\la{A} + r_\la{X}^2r_\la{A}^2) \log N}$.

\para{Preconditioning local systems} While the \abbrev{AMEN} and
\abbrev{DMRG} solvers work well for a range of examples, in many
integral-equation settings, a modification is required to ensure fast
convergence.

The condition number of the
original linear system directly affects the performance of iterative
solvers used to solve the local systems outlined above.
When the original linear system is not well-conditioned, e.g., due to
complex geometry \cite{Quaife2013a}, it is necessary to precondition
the local solves so that the performance of the inversion algorithm
does not degrade: we need to precondition the computation of the
preconditioner!

The matrices in each local system have tensor structure\Mdash/as
described in \cite{dolgov2013amen1,dolgov2013amen2}, and
\pr{apx:appendix_inversion}\Mdash/that can be exploited to construct
preconditioners for each of these local systems.
Block-Jacobi preconditioners are the most obvious solution, and are
available in the TT-Toolbox \cite{toolboxtt}.
However, we found them to be ineffective for matrices obtained from
integral equation formulations.

We propose a strategy based on a global preconditioner for the system
in \pr{eq:inverse}.
Letting $\la{M}$ denote a right preconditioner (with easy to compute
and low-rank \QTT\ representation), one can rewrite \pr{eq:inverse} in
preconditioned form as
\begin{equation}
  \vec(\laT{AMY}) = (\laT{I}_N \otimes \laT{AM}) \vec(\laT{Y}) =
  \vec(\laT{I}_N),
  \label{eq:precond_mateq}
\end{equation}
with $\laT{A}^{-1} = \laT{X} = \laT{MY}$. The conditioning of the
local systems to determine each core of $\laT{Y}$ thus depends on the
conditioning and \QTT\ representation of $\laT{AM}$. There are
multiple choices available for constructing $\laT{M}$.

In our experiments with boundary integral equations in
\pr{ssc:bie-lattice}, our integration domain consists of a collection
of disjoint surfaces with spherical topology. In this case, we opted
for a block-diagonal ``approximate'' preconditioner constructed by
replacing each surface with a sphere and analytically inverting the
self-interactions blocks (diagonal operator in spherical harmonics
basis).  We then use the \QTT\ compression algorithm to approximate
this block-diagonal system with $\ttA[M]$, compute the fast
\QTT\ product $\ttA[A]\ttA[M]$ and solve \pr{eq:precond_mateq} using
the \abbrev{AMEN} or \abbrev{DMRG} algorithms.

More generally, the preconditioner $\laT{M}$ may be the inverse of a
block-diagonal or block-sparse version of $\laT{A}$ (such as the
sparsifying preconditioners in \cite{Quaife2013a, Ying2014}).


\subsection{\QTT\ matrix-vector products \label{ssc:TTmatvec}}
The second component needed by a solver or a preconditioner is a
matrix-vector product for a matrix represented in the \QTT\ format.
When the vector is compressible in the \QTT\ form (e.g., for smooth
data), it is beneficial to compress the vector in \QTT\ form and then
apply the matrix.
We outline the matrix-vector product steps for \QTT-compressed and
uncompressed vectors as discussed in \cite{oseledets2010approximation}.

\para{\QTT\ Compressed matrix-vector product}
Let $\laT{A}$ be a
matrix and $\laT{b}$ a vector with \QTT\ decompositions consisting of
cores $\tensor{G}_{k}^{\la{A}}(\alpha_{k-1}, \overline{i_{k} j_{k}},
\alpha_{k})$ and $\tensor{G}_{k}^{\la{b}} (\beta_{k-1}, j_{k},
\beta_{k})$, respectively.
Cores for a \QTT\ decomposition of the product $\laT{c=Ab}$ is
computed as
\begin{equation}
  \tensor{G}^{\la{c}}_{k}(\overline{\alpha_{k-1}
    \beta_{k-1}}, i_{k} ,\overline{\alpha_{k} \beta_{k}})
  = \sum_{j_{k}} {\tensor{G}_{k}^{\la{A}}(\alpha_{k-1},
    \overline{i_{k} j_{k}} ,\alpha_{k})
    \tensor{G}_{k}^{\la{b}}(\beta_{k-1},j_{k},\beta_{k})}.
\end{equation}
That is, each core of $\la{c}$ is computed by the contraction over the
auxiliary index $j_k$ and merging the two pairs of auxiliary indices
$(\alpha_{k-1},\beta_{k-1})$ and $(\alpha_k,\beta_k)$.
If the \QTT\ ranks for the matrix and the vector are bounded by
$r_\la{A}$ and $r_\la{b}$, respectively, the overall computational
complexity of this structured product is $\O{r_\la{A}^2 r_\la{b}^2
  \log N}$.

\para{Matrix-vector product for an uncompressed vector} Given a
\QTT\ decomposition for a matrix $\laT{A}$, the algorithm proceeds by
contracting one index at a time, applying the corresponding
\QTT\ core.
For efficiency, it is much faster to do this contraction as a
matrix-vector operation, requiring permuting the vector elements.
This product algorithm is given in \pr{alg:TT-matvec} and has the
complexity of $\O{r_\la{A}^2 N \log N}$.

\begin{algorithm}[!bt]
  \begin{center}
    \begin{algorithmic}[1]
      \STATE \textbf{Inputs:} \QTT\ decomposition $\mathtt{A}$ with cores $\tensor{G}_k$, column vector $\laT{b}$
      \STATE \textbf{Output:} vector $\laT{y = Ab}$
      \STATE Initialize $\la{y}_0 = \la{b}^T$, and $\alpha_0,\alpha_d$ as size 1 trivial indices.
      \FOR{ $k = 1$ \TO $d$}
      \STATE Permute core dimensions and reshape as a matrix of size $r_{k}m_k \times r_{k-1}n_k$:
      \begin{equation*}
      \laT{M}_k\left(\overline{\alpha_{k}i_k } , \overline{\alpha_{k-1} j_k}\right) = \tensor{G}_k\left(\alpha_{k-1} , \overline{i_k j_k} , \alpha_{k}\right)
      \end{equation*}
      \STATE Reshape $\la{y}_{k-1}$ to merge columns from the $n_k$ children of each source box $B$, indexed by $j_k$:
      \begin{equation*}
      \laT{b}_k\left(\overline{\alpha_{k-1}j_k},\overline{J^{BOX}_{k} I^{LCL}_{k-1}}\right) =
      \la{y}_{k-1}\left(\alpha_{k-1},\overline{J^{BOX}_{k-1}I^{LCL}_{k-1}}\right)
      \label{lin:colmerge}
      \end{equation*}
      \STATE Obtain data for each target children, indexed by $i_k$:
      \begin{equation*}
      \laT{\phi}_{k} = \laT{M}_k\laT{b}_k
      \label{lin:TT-matvec-op}
      \end{equation*}
      \STATE Permute $\laT{\phi}_k$ (separate rows from the $m_k$ children of a target box $B$):
      \begin{equation*}
      \laT{y}_k(\alpha_{k},\overline{J^{BOX}_{k} I^{LCL}_{k}}) =
        \la{\phi}_{k}(\overline{\alpha_{k}i_k},\overline{J^{BOX}_{k} I^{LCL}_{k-1}})
      \label{lin:rowseparate}
      \end{equation*}
       \ENDFOR
      \RETURN $\laT{y} = \laT{y}_d^{T}$
    \end{algorithmic}
  \end{center}
  \mcaption{alg:TT-matvec}{\QTT\ matrix by uncompressed vector product}{}
\end{algorithm}
Most of the work in \pr{alg:TT-matvec} is to prepare the operands for
the index contraction as a matrix-vector multiply in
\pr{lin:TT-matvec-op}.
In the context of the matrix-block tree, sequentially contracting
indices implies an upward pass through the tree, in which one level of
the hierarchy is processed at a time, eliminating one source index
$j_k$ to compute the part of the matrix-vector product corresponding
to the target index $i_k$.
This upward pass produces a series of intermediate arrays indexed by
$\{i_1, \dotsc, i_k\}$ and $\{j_{k+1},\dotsc, j_d\}$.
The first index set determines local coordinates at each box of the
target tree, and the second index set corresponds to a box index at
level $k$ of the source tree.
To reflect this, we use the following notation
\begin{equation}
  I_k^\lbl{LCL} = \overline{i_1\cdots i_k}, \quad J_k^\lbl{BOX} =
  \overline{j_{k+1}\cdots j_d}.
\end{equation}
Notice that, by definition, $I_k^\lbl{LCL} =
\overline{I_{k-1}^\lbl{LCL}i_k}$ and $J_{k-1}^\lbl{BOX} =
\overline{j_k J_k^\lbl{BOX}}$.

When \pr{alg:TT-matvec} initializes, the vector that the first core
$\tensor{G}_1$ acts upon is the first unfolding matrix of $\laT{b}$
\begin{equation}
  \laT{b}_1(j_1,\overline{j_2 \dots j_d}) = \laT{b}(\overline{j_1 \dots j_d})
\end{equation}
For each $k>1$, we reshape the result $\laT{y}_{k-1}$ from the
previous step in \pr{lin:colmerge} in order to apply the core
$\tensor{G}_k$.
For each source box $B$ at level $k$, the $n_k$ columns of size $r_k$
from its children are merged.

The reshaped core $\laT{M}_k$ consists of $m_k\times n_k$ blocks each
of size $r_{k+1}\times r_k$.
By applying it to $\laT{b}_k$, we obtain $r_{k+1}$ results for each of
the $m_k$ boxes on $\tree{T}_\lbl{trg}$ at level $k$.
In \pr{lin:rowseparate}, we separate each block row, so that the last
column indices of $\laT{y}_k$ correspond to box indices in the target
tree.

The matrix vector product in \pr{lin:TT-matvec-op} is between a matrix
of size $r_{k+1} m_k \times r_k n_k$ and a vector of size $r_k n_k
\times \frac{N}{n_k}$, and so it requires $2m_k r_{k+1}r_{k}N$
operations.
If $m_k = n_k=n$ and $r_k \leq r_\la{A}$ for all $k$, this computation is
$\O{r^2_\la{A}N}$.
Since there are $d = \log_n N$ cores, the total computational cost is
$\O{r^2_\la{A}N\log N}$.

\section{Numerical experiments\label{sec:results}}
We present the results of a series of numerical
experiments quantifying the performance of the \QTT\ decomposition and
inversion algorithms discussed in \pr{sec:background,sec:inversion}.
As our model problems, we use linear systems of equations arising from
the \nystrom\ discretization of volume and boundary integral operators
in three dimensions, \pr{eq:basic-integral,eq:Asigma}.
We consider operators with the single- or double-layer Laplace
fundamental solution as their kernel.
For each kind of operator, we construct \QTT-based accelerated solvers
and compare them to some of the other existing alternatives.

Our implementation uses Matlab TT-Toolbox~\cite{toolboxtt} for all \QTT\
computations (with our modification to \abbrev{AMEN} and \abbrev{DMRG})
and of \abbrev{FMMLIB3D} \cite{toolboxfmm} for accurate
and fast matrix-vector apply.
All experiments are performed serially on
Intel~Xeon~E--$2690$~v$2$($3.0$~GHz) nodes with $64$~\abbrev{GB} of
memory.

\subsection{Key findings}
In \pr{ssc:tt-vie}, we demonstrate that for translation and
non-translation invariant volume integral equations with
non-oscillatory kernels, the \QTT\ inversion is cost-effective for
moderate to high target accuracies ($\sci{-10} \le \acc \le \sci{-6}$).
Accordingly, we propose using the \QTT\ inversion combined with the
fast matrix-vector algorithms as a fast direct solver.

The results reveal that the maximum \QTT\ ranks for the forward and
inverse volume operators are bounded for both translation and
non-translation invariant integrals and the maximum rank of the
inverse is proportional to that of the forward matrix
(\pr{tbl:TT-3d-6,tbl:TT-3d-NTI-6}).
Having bounded ranks for these operators results in logarithmic
scaling for factorization. As mentioned in \pr{sec:intro},
state-of-the-art direct solvers for \abbrev{HSS} and other
hierarchical matrices, when applied to volume integrals in \threed,
exhibit above linear scaling, as well as significantly high setup and
storage costs, limiting their practicality.
This makes the \QTT\ an extremely attractive alternative in this
setting.
For example, for $N = \num{262144}=64^3$, solving the translation
invariant problem in \pr{ssc:vie-TI-3d} using the \HIF\ solver
for a target accuracy of $\acc = \sci{-6}$ requires a setup time of
$32$~hours, as well as $40$~\abbrev{GB} of memory.
Setting up the corresponding \QTT\ inverse takes $86$~seconds, and
requires only $2$~\abbrev{MB} of memory (See \pr{tbl:TT-3d-6}).
We emphasize that the solve times for arbitrary right-hand sides
still scale as $\O{N\log{N}}$.

In \pr{ssc:bie-lattice}, we explore the application of the \QTT\ in
inversion of matrices arising from boundary integral equations in
complex, multiply-connected geometries.
For these systems, we employ a low-accuracy \QTT\ inverse as a
preconditioner for \GMRES, a Krylov subspace iterative method.
We establish comparisons with two types of preconditioners: a simple,
inexpensive multigrid V-cycle, and a low-accuracy \HIF\ approximate
inverse.

Both \QTT\ and \HIF\ approximate inverses provide considerable reduction
in the number of iterations (\pr{tbl:TT-BIE-iters-d4c0}).
However, they differ in their setup cost and memory requirements
(\pr{fig:SlvMV-Y43-15-D4C0,fig:SlvMV-Y43-15-d4c7}).
\QTT's sublinear setup cost and very modest memory requirements make
it more and more affordable for larger problems\Mdash/less than one
\matvec\ for $N\ge\sci[2]{5}$.
Nonetheless, \QTT\ setup and apply costs are proportional to the
maximum \QTT\ rank and hence increasing by increasing $\acc_p$.
Due to this, the $\acc_p\approx\sci{-3}$ for \QTT\ strikes the right
balance between setup cost, apply time, and iteration reduction.
When tested with progressively worse conditioned systems, both
preconditioning schemes show speedups almost independent of condition
number (\pr{fig:SpUp-vs-logdst,fig:StpandMem-vs-logdst}).

Between these two solvers, we observe a trade-off in terms of
performance and efficiency: while the obtained speedups are generally
higher for the \HIF\ due to a faster inverse apply (although the
difference decreases with problem size) the modest memory footprint
and sublinear scaling of the setup cost for the \QTT\ make it extremely
efficient, allowing the solution of problems with millions of
unknowns. Hence, the choice between these and similar direct-solver
preconditioner is problem and resource dependent. For problems in
fixed geometries involving a large number of right-hand-sides, the
additional speedup provided by \HIF\ might be desirable. For problems in
moving geometries or with a large number of unknowns, \QTT\ provides an
efficient, cost-effective preconditioner that can be cheaply updated.

\subsection{Volume integral equations\label{ssc:tt-vie}}
We test the performance of the \QTT\ decomposition as a volume
integral solver in the unit box $[-1,1]^3$ with Laplace single-layer
kernel $K(r)= \frac{1}{4\pi r}$.
We discretize the integral on a regular grid with total of $N=2^d$
points and spacing $h = 2/N^{1/3}$, indexing them according to
successive bisection of the domain along each coordinate direction
(i.e., Morton ordered).
This corresponds to a uniform binary tree with $d-1$ levels.
We use a \nystrom\ discretization of \pr{eq:basic-integral} with a
first-order ($\O{h}$) punctured trapezoidal quadrature \cite{marin2014}.
We note that corrected trapezoidal quadratures of arbitrary high-order
in two and three dimensions \cite{aguilar2005high,duan2009high} based
in \cite{kapur1997high} are available.
These corrections result in a sparse, banded perturbation to the
system matrix $\laT{A}$ obtained using the trapezoidal rule.
Thus, we expect relatively small changes in the \QTT\ rank behavior
for high-order discretizations.

\QTT\ decompositions obtained for the resulting matrix and its inverse
correspond to tensors of dimension $d$ and mode sizes $m_k = n_k =
2$.
For problem sizes ranging from $N=16^3\text{ to } 256^3$, we report
compression and inversion times in \QTT\ format, maximum \QTT\ ranks,
and storage requirements for the inverse.
For the application of the \QTT\ inverse, we test both of the apply
algorithms in \pr{ssc:TTmatvec}.
We report the time it takes to apply the \QTT\ inverse to a random,
dense vector of size $N$ (denoted as ``solve'') as well as the time it
takes to compress a vector of size $N$ sampled from a smooth function
and then apply the inverse to it (denoted as ``\QTT\ solve'').
In the
experiments presented in \pr{tbl:TT-3d-6,tbl:TT-3d-NTI-6}, we
employ \QTT-compressed representations for the right-hand-side
$f(x,y,z) = \phi(x) \phi(y) \phi(z)$ with $\phi(t)=\diric(2\pi t,10)$
(Dirichlet periodic $\sinc$ function), a smooth, oscillatory function
with bounded \QTT\ ranks:
\begin{equation}
  \mathrm{diric}(t,\nu) = \begin{cases}
  \frac{\sin(\nu t /2)}{\nu \sin(t/2)} & t \neq 2 \pi k, k \in \mathbb{Z}, \\
  (-1)^{k(\nu-1)} & t = 2 \pi k, k \in \mathbb{Z}.
  \end{cases}
  \label{eq:diric}
\end{equation}

\subsubsection{Translation invariant kernels in \threed\label{ssc:vie-TI-3d}}
We first consider the translation invariant system corresponding to
the Laplace single-layer kernel, results of which are reported in
\pr{tbl:TT-3d-6}.
We set the target accuracy of algorithms to $\acc=\sci{-6}$.
For each experiment, we test the accuracy of both forward and inverse
\QTT\ compression applied to a random, dense vectors, obtaining
residuals ranging from $\sci[1.5]{-7}$ to $\sci[5.5]{-7}$ and
$\sci[1.0]{-6}$ to $\sci[1.3]{-6}$, respectively.
\begin{table}[!tb]
  \newcolumntype{M}{>{\centering\let\newline\\\arraybackslash\hspace{0pt}$}c<{$}}
  \newcolumntype{N}[1]{>{\centering\let\newline\\\arraybackslash\hspace{0pt}}D{.}{.}{#1}}
  %
  %
  \centering
  \small
  \begin{tabular}{N{3.0} N{3.2} N{3.2} M M N{1.2} N{3.2} N{3.2} N{3.2}}\toprule
    \multirow{2}{*}[-5pt]{$N$}
    &\multicolumn{2}{c}{Time~(sec)}
    &\multicolumn{2}{c}{Max Rank}
    &\multicolumn{1}{c}{\multirow{2}{*}[-5pt]{\parbox[c]{.7in}{\centering Inverse Memory~(\abbrev{MB})}}}
    &\multicolumn{3}{c}{Inverse~Apply~(sec)}
    \\\cmidrule(lr){2-3}\cmidrule(lr){4-5}\cmidrule(lr){7-9}
    &\multicolumn{1}{c}{\parbox[c]{.6in}{\centering Compress}}
    &\multicolumn{1}{c}{\parbox[c]{.6in}{\centering Invert}}
    &\text{Forward}
    &\text{Inverse}
    &
    &\multicolumn{1}{c}{\parbox[c]{.6in}{\centering Solve}}
       &\multicolumn{2}{c}{\parbox[c]{1.2in}{\centering \QTT\ Solve\\Compress~~\&~~Apply}}
    \\ \midrule
    16^3  & 2.79 & 118.19 & 103  & 144  & 2.60 & 0.07      & 0.72  & 2.41 \\
    32^3  & 2.89 & 141.27 & 106  & 125  & 2.86 & 0.76      & 1.19  & 4.87 \\
    64^3  & 4.35 & 82.07  & \099 & \097 & 2.29 & 4.41       & 4.40  & 4.79 \\
    128^3 & 5.69 & 60.23  & \090 & \074 & 1.68 & 29.31    & 27.52 & 3.80 \\
    256^3 & 6.04 & 33.53  & \080 & \057 & 1.19 & 189.53  & 31.93 & 2.39 \\
     \bottomrule
  \end{tabular}
  \mcaption{tbl:TT-3d-6}{Translation invariant {\threed}
    volume Laplace kernel}{Compression and Inversion times, maximum
    {\QTT} ranks, memory requirements, and solve times for
    the {\QTT} decomposition algorithms applied to the
    {\threed} Laplace single-layer kernel.
    The problem sizes range from $N=\num{4096}\text{ to
    }\num{16777216}$, and the target accuracy is set to
    $\acc=\sci{-6}$.
    Achieved accuracies for the solve match this target accuracy
    closely, ranging from $\sci[1.0]{-6}$ to $\sci[1.3]{-6}$.
    For ``{\QTT} Solve'' we report the time required for the
    compression of the right-hand-side and the application of the
    {\QTT} inverse to the compressed vector.
    In this case, the right-hand-side is $f(x,y,z) = \phi(x) \phi(y)
    \phi(z)$ with $\phi(t)=\diric(2\pi t,10)$, defined in
    \pr{eq:diric}, which is a smooth, oscillatory functions whose
       {\QTT} ranks are observed to be bounded $(r \leq 75)$.
  }
\end{table}

\para{Rank behavior and precomputation costs} We see in
\pr{tbl:TT-3d-6} that the maximum \QTT\ rank for the system matrix
given a target accuracy is bounded, as argued in \pr{sec:integraleq}.
As noted in \pr{sec:inversion}, although we have no concrete estimate
for the rank behavior of the inverse, in all cases considered in this
work we observe that the maximum rank of the inverse is proportional
to that of the original matrix.  As was first observed in
\cite{khoromskaia2014Newton} for the Tucker decomposition, for a wide
range of volume integral kernels in $1, 2,$ and $3$ dimensions, we in
fact observe forward and inverse \QTT\ ranks tend to decrease with
$N$.

Since forward ranks are bounded, the number of kernel evaluations, and
hence the time it takes to produce the \QTT\ factorization (Compress
column in \pr{tbl:TT-3d-6}), displays logarithmic growth with $N$.
We recall from \pr{ssc:inverse} that the dominant cost in the
iterative inversion algorithms employed is the solution of local
linear systems for each \QTT\ core, whose sizes depend on the rank
distributions of $\laT{A}$ and $\laT{A}^{-1}$.
Even though additional levels (and corresponding tensor dimensions)
are added as $N$ increases, the decrease in \QTT\ ranks is substantial
enough to bring down the inversion time, as well as the storage
requirements for the inverse in \QTT\ form (Invert and Inverse Memory
columns in \pr{tbl:TT-3d-6}).
Perhaps the most outstanding consequence of this is how economical the
computation and storage of the inverse in the \QTT\ format is.
For $N=\num{16777216}=256^3$, with a target accuracy of $\acc =
\sci{-6}$, it takes only $34$~seconds to compute the inverse using
1.2~\abbrev{MB} of storage.

Performing these experiments with higher target accuracies, we observe
a proportional increase in \QTT\ ranks (e.g., $r_\la{A} \leq 250$ and
$r_{\la{A}^{-1}} \leq 600$ for $\acc = \sci{-10}$), and similar scaling of
ranks and computational costs with problem size to those reported in
\pr{tbl:TT-3d-6}.
Although higher ranks imply higher algorithmic constants for
compression, inversion, and apply steps, these costs stay reasonably
economical.

\para{Inverse apply} As noted in \pr{ssc:TTmatvec}, the application of
a matrix in the \QTT\ form has computational complexity dependent on
the structure of the operand.
If the operand is compressible in the \QTT\ format, the inverse apply
is $\O{\log N}$ and otherwise $\O{N\log N}$.
In \pr{tbl:TT-3d-6}, we report timing for both types of
right-hand-side, and confirm that in both cases experimental results
match the corresponding complexity analysis.
When the right-hand-side is compressible in \QTT\ form, we report
timings for right-hand-side compression (``Compress'') and
\QTT\ matrix-vector multiply (``Apply'') in the last two columns of
the table.

As we mentioned above, we use samples from a tensor product
of periodic sinc functions as a compressible right-hand-side, with
bounded \QTT\ ranks $(r_\la{b} \leq 75)$.
As indicated in the analysis in \pr{ssc:TTmatvec}, computation for
this inverse apply depends on both the ranks of the inverse and of the
right-hand-side.
Given rank bounds $r_{\la{A}^{-1}}$ for the matrix and $r_\la{b}$ for
the right-hand-side, the complexity is $\O{r_{\la{A}^{-1}}^2 r_\la{b}^2
  \log N}$.
Here, the decrease in the \QTT\ ranks of $\la{A}^{-1}$ brings down
the cost of the apply.

\subsubsection{Non-Translation invariant kernels in \threed\label{ssc:vie-NTI-3d}}
Here we test the ability of the \QTT\ decomposition to handle
non-translation invariant kernels by choosing $b(x)$ and $c(y)$ in
\pr{eq:basic-integral} to be Gaussians of the form
\begin{equation}
  b(x) = 1+e^{-(x-x_0)^T(x-x_0)}, \quad  c(y) = 1+e^{-(y-y_0)^T(y-y_0)},
\end{equation}
as it was done in \cite{CMZ14}. We report the results in
\pr{tbl:TT-3d-NTI-6}. We again test the accuracy of both forward and
inverse applies, obtaining residuals ranging from $\sci[1.1]{-6}$ to
$\sci[1.4]{-6}$ and $\sci[1.2]{-6}$ to $\sci[2.0]{-6}$, respectively.
\begin{table}[!bt]
  \newcolumntype{M}{>{\centering\let\newline\\\arraybackslash\hspace{0pt}$}c<{$}}
  \newcolumntype{N}[1]{>{\centering\let\newline\\\arraybackslash\hspace{0pt}\ensuremath}D{.}{.}{#1}}
  %
  \centering
  \small
  \begin{tabular}{N{3.0} N{3.2} N{3.2} M M N{1.2} N{3.2} N{3.2} N{3.2}}\toprule
    \multirow{2}{*}[-5pt]{$N$}
    &\multicolumn{2}{c}{Time~(sec)}
    &\multicolumn{2}{c}{Max Rank}
    &\multicolumn{1}{c}{\multirow{2}{*}[-5pt]{\parbox[c]{.7in}{\centering Inverse Memory~(\abbrev{MB})}}}
    &\multicolumn{3}{c}{Inverse~Apply~(sec)}
    \\\cmidrule(lr){2-3}\cmidrule(lr){4-5}\cmidrule(lr){7-9}
    &\multicolumn{1}{c}{\parbox[c]{.6in}{\centering Compress}}
    &\multicolumn{1}{c}{\parbox[c]{.6in}{\centering Inverse}}
    &\text{Forward}
    &\text{Inverse}
    &
    &\multicolumn{1}{c}{\parbox[c]{.6in}{\centering Solve}}
    &\multicolumn{2}{c}{\parbox[c]{1.2in}{\centering \QTT\ Solve\\Compress~~\&~~Apply}}
    \\ \midrule
    16^3  & 42.51 & 2510.38 & 386 & 209  & 5.33 & 0.14   & 0.73  & 5.65  \\
    32^3  & 62.67 & 1787.77 & 364 & 161  & 5.04 & 1.13   & 1.16  & 15.45 \\
    64^3  & 71.18 & 647.40  & 301 & 113  & 3.30 & 6.33   & 2.93  & 12.79 \\
    128^3 & 48.84 & 234.96  & 232 & \081 & 2.03 & 35.84  & 27.32 & 7.54  \\
    256^3 & 13.35 & 64.90   & 130 & \062 & 1.37 & 219.63 & 32.20 & 3.71  \\
    \bottomrule
  \end{tabular}
  \mcaption{tbl:TT-3d-NTI-6}{Non-translation invariant
    \threed\ volume Laplace kernel}{Compression and
    Inversion times, maximum {\QTT} ranks, memory
    requirements, and solve times for the {\QTT} decomposition
    algorithms applied to the non-translation invariant
    {\threed} Laplace single-layer kernel.
    Problem sizes range from $N=\num{4096}\text{ to } \num{16777216}$,
    and the target accuracy is $\acc=\sci{-6}$.
    Achieved accuracies for the
    solve match this target accuracy closely, ranging from
    $\sci[1.2]{-6}$ to $\sci[2.0]{-6}$.
    Timings under ``{\QTT} solve'' include compression of
    right-hand-sides obtained from the sampling of $f(x,y,z) = \phi(x)
    \phi(y) \phi(z)$ with $\phi(t)=\diric(2\pi t,10)$, (similar to
    \pr{tbl:TT-3d-6}) and the {\QTT} apply.
  }
\end{table}

For non-translation invariant kernels such as the one tested in
\pr{tbl:TT-3d-NTI-6}, the ranks of the matrix is expected to increase
as a function of the \QTT\ ranks of $b$ and $c$ ($r_\la{b},r_\la{c} \simeq
30$, in this case).
However, it's interesting to note that the ranks of the inverse do not
seem to increase much compared to the corresponding
translation-invariant case reported in \pr{tbl:TT-3d-6}.
This is reflected in the performance of both kinds of inverse applies.
Comparing the corresponding columns of
\pr{tbl:TT-3d-NTI-6,tbl:TT-3d-6}, we observe that the difference in
performance between both experiments decreases as $N$ increases.

\subsection{Boundary integral equations in complex geometries \label{ssc:bie-lattice}}
Except for simple surfaces, it is generally the case that for a given
target accuracy, applying the \QTT\ decomposition and inversion
algorithms as described in the previous sections will yield
\QTT\ ranks higher than in the volume cases described in
\pr{ssc:tt-vie}.
As indicated in \pr{sec:integraleq}, this is likely due to the loss of
translation invariance, which makes self and near interactions less
compressible.

As is the case for other fast direct solvers, the increase in
\QTT\ ranks implies higher algorithmic constants, and so it becomes
more practical to compute the \QTT\ compression and inversion at low
target accuracies and use them as robust preconditioners for an
iterative algorithm such as \GMRES.
In this section, we demonstrate the application of the
\QTT\ decomposition as a cost effective and robust preconditioner for
boundary integral equations.

\subsubsection{Experiment setup}
In order to build an example closer to boundary value problems
encountered in applications, we consider an exterior Dirichlet problem
for the Laplace equation on a multiply-connected complex domain with
boundary $\Gamma$
\begin{equation}
  \frac{1}{2}\sigma(x) + \int_{\Gamma} {D (\norm{x - y}) \sigma(y) \d
    \Gamma_y} = f(x), \tag{\ref{eq:basic-integral}}
\end{equation}
where $D(r)$ is the Laplace double-layer kernel in \threed.
We modify this kernel by adding rank one operator per surface to match
the far-field decay \cite{kress}.
For $\Gamma$, we choose a cubic lattice of closed surfaces
$\Gamma_i\,(i=1,\dotsc,q^3)$ of genus zero (spherical topology).
Examples of such shapes and their distribution are shown in
\pr{fig:model-surfaces}.
We discretize each surface using a basis set of spherical harmonics of
order $p$, and compute singular integrals using fast and spectrally
accurate singular quadratures \cite{graham2002fully}.
This setup, while being relevant to problems in electrostatics and
fluid flow (particulate Stokes flow), enables us to study the effects
of individual surface complexity as well as of interactions between
surfaces on the performance of the \QTT\ preconditioner.
\begin{figure}[!bt]
  \centering
  %
  %
  \subfigure[$(\ell,m)=(4,3)$]{\label{sfg:y4}\fbox{\includegraphics[width=.22\linewidth]{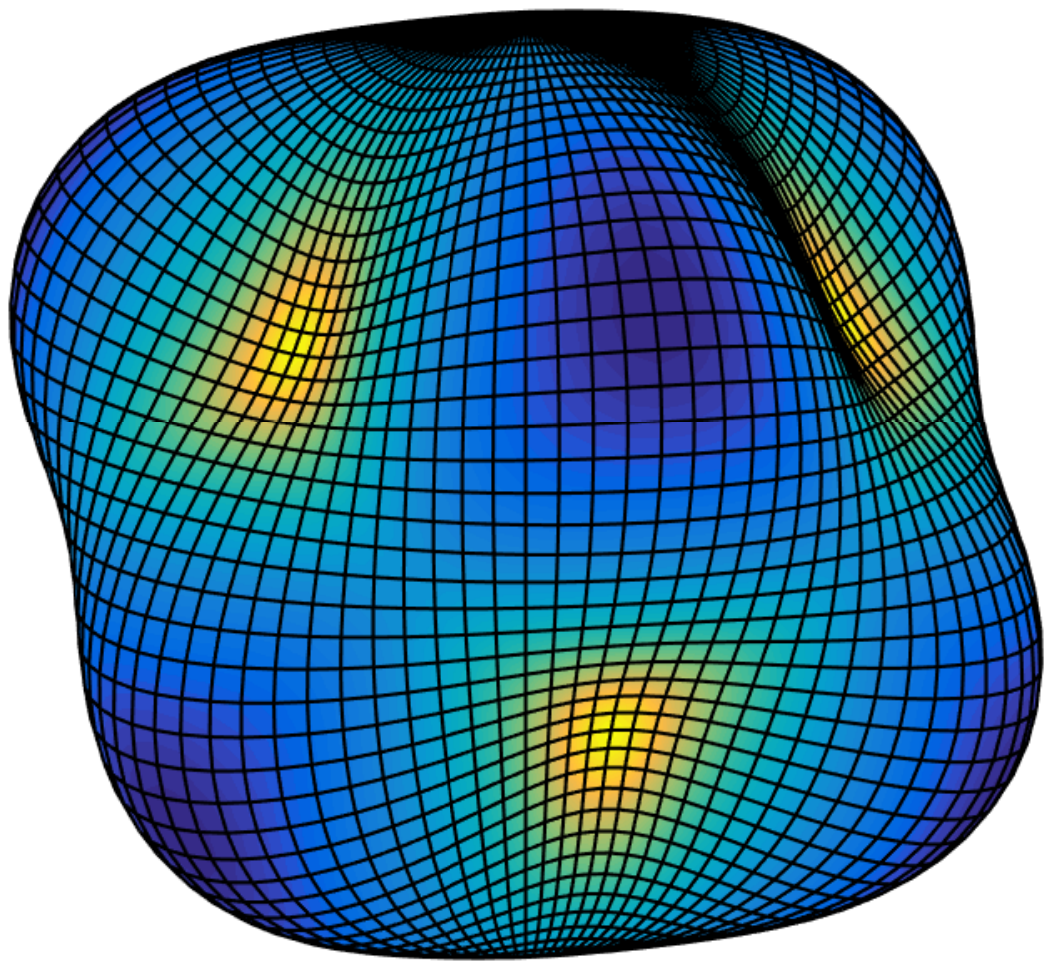}}}
  \subfigure[$(\ell,m)=(8,7)$]{\label{sfg:y8}\fbox{\includegraphics[width=.22\linewidth]{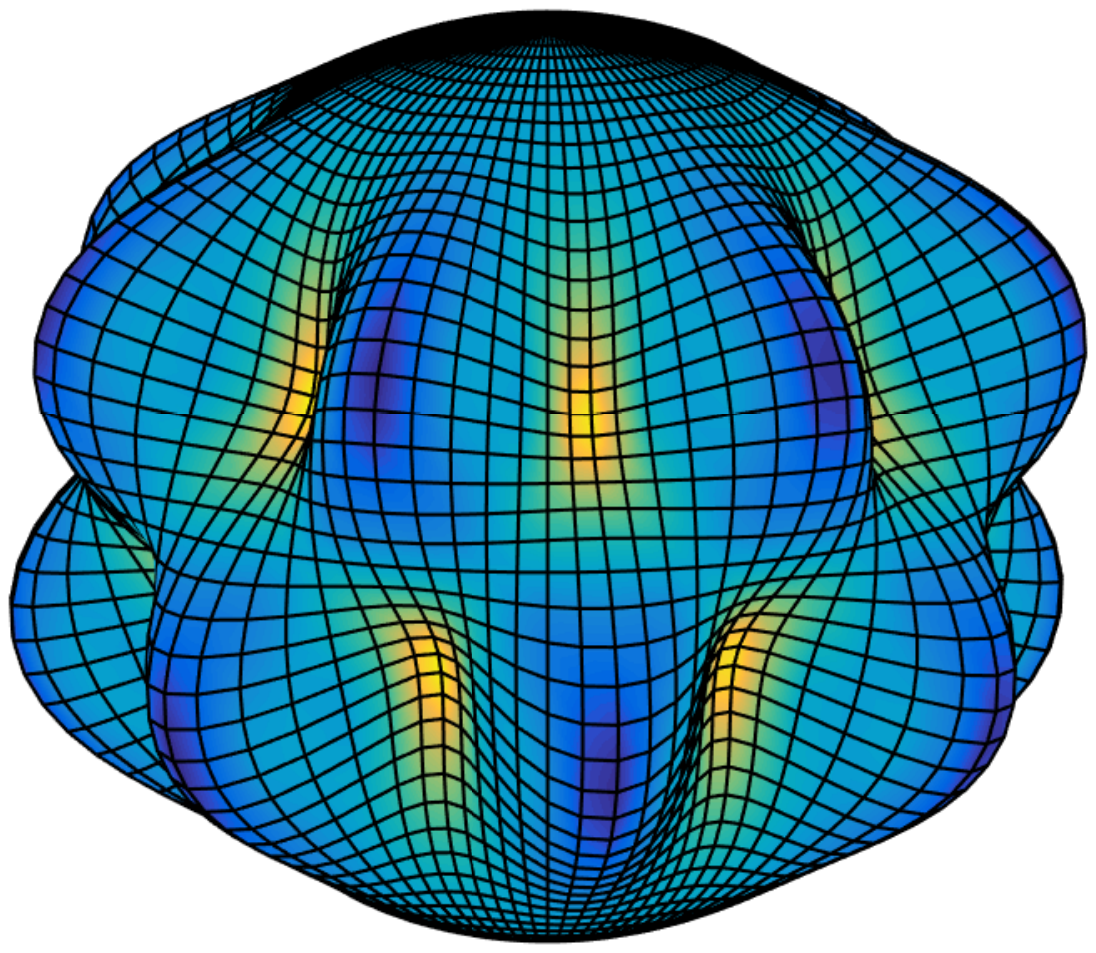}}}
  \subfigure[$(\ell,m)=(16,15)$]{\label{sfg:y16}\fbox{\includegraphics[width=.22\linewidth]{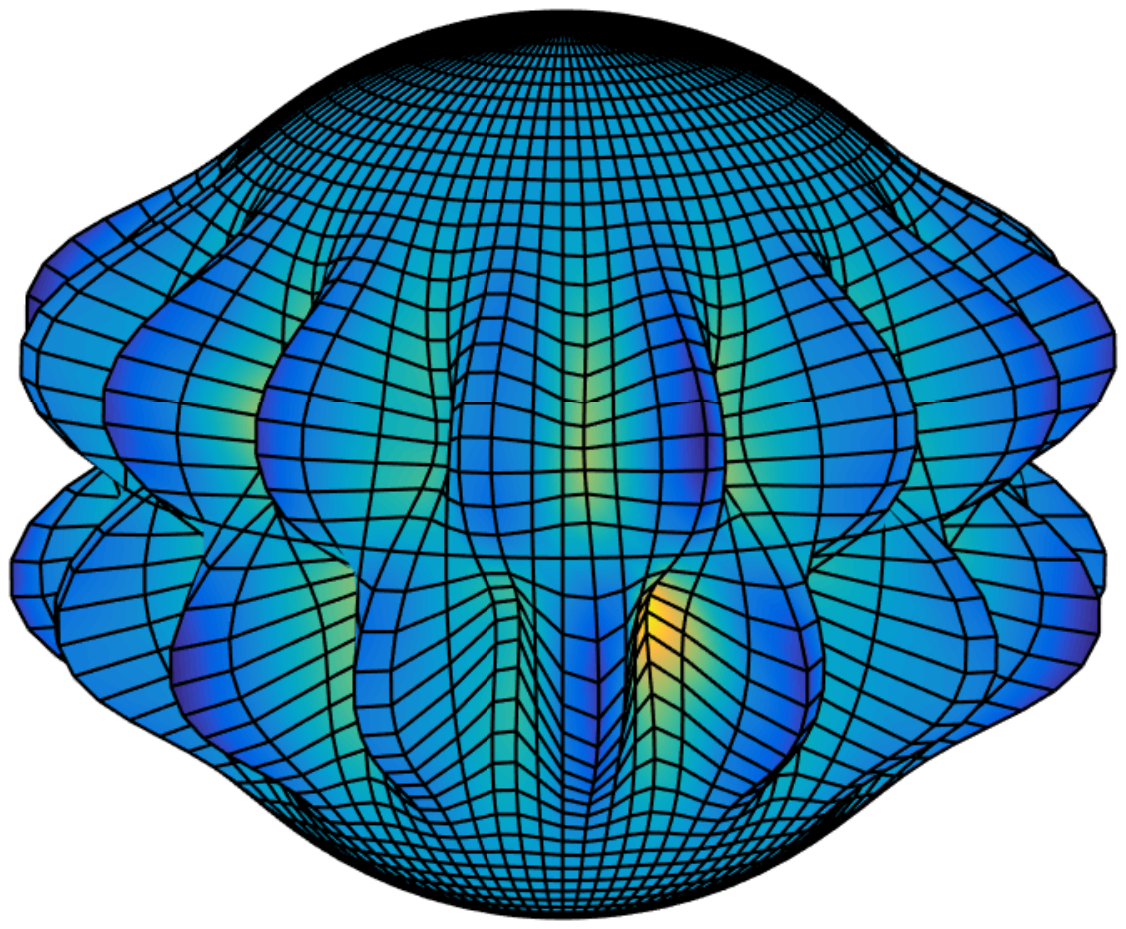}}}
  \subfigure[Irregular
    Lattice]{\label{sfg:lattice}\fbox{\includegraphics[width=.22\linewidth]{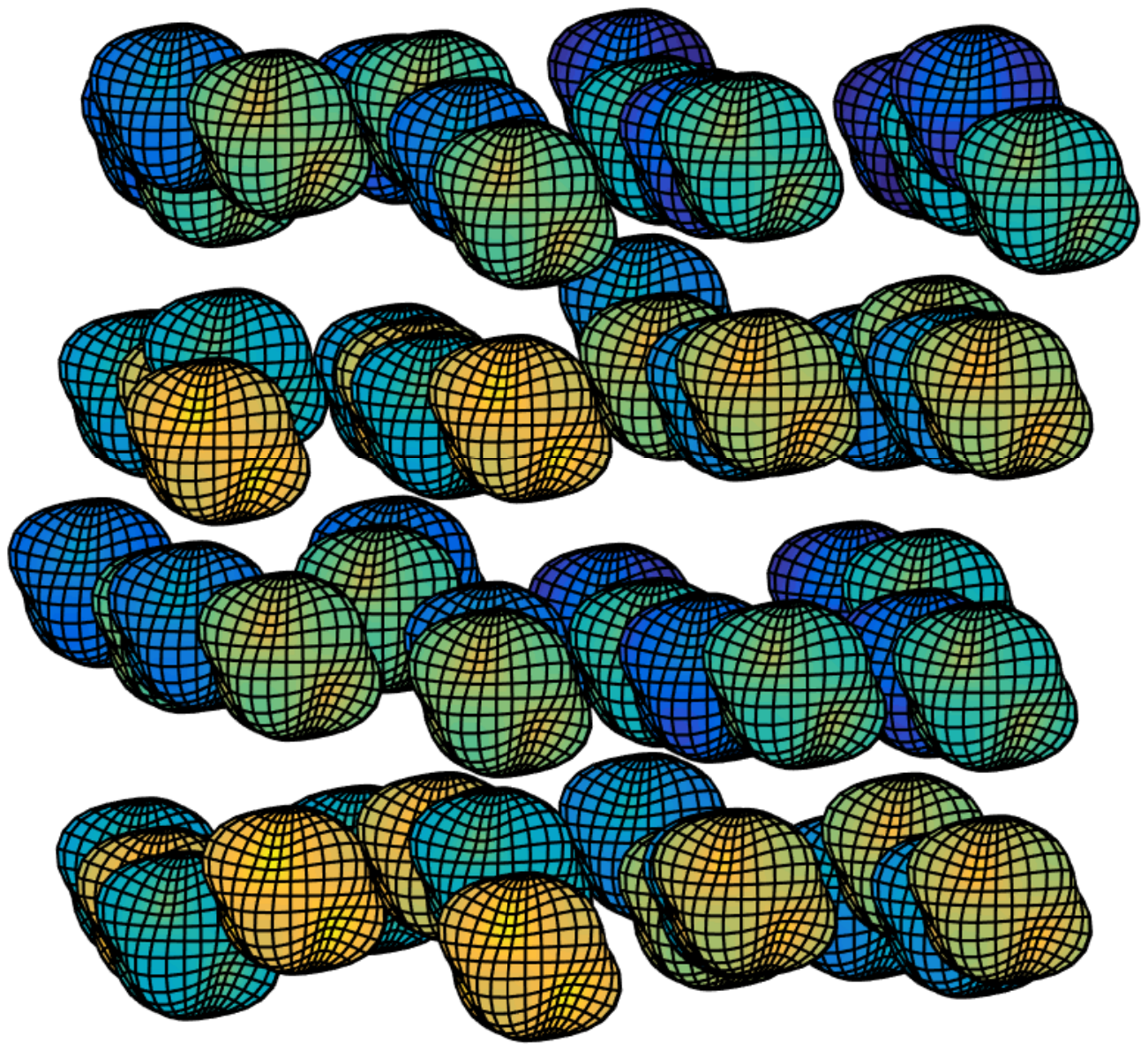}}}
  \mcaption{fig:model-surfaces}{Example of test
  geometries}{\subref{sfg:y4}--\subref{sfg:y16} Example surfaces used
      in our experiments. The chart for surfaces is given by
      $\rho(\theta,\phi) = 1 + 0.5Y_{\ell m}(\theta,\phi)$ where
      $Y_{\ell m}$ denotes the spherical harmonics.
      The color is the signed mean curvature.
      \subref{sfg:lattice} A random lattice of $64$ shapes
      (arbitrarily colored).
    }
\end{figure}

In all experiments in this section, we precondition the local systems
based on the framework discussed in \pr{ssc:inverse}.
We construct a corresponding block-diagonal system $\la{M}$ based
spheres as $\Gamma_i$ and only considering the self interaction of
each sphere.
The \QTT\ compressed form of this operator is denoted by $\ttA[M]$.
The \QTT\ preconditioner is thus the product of two \QTT\ matrices
$\ttA[M]$ and $\ttA[Y]$. This provides a significant acceleration for the inversion algorithm
by improving the conditioning of the local linear systems.
It also provides an acceleration for the resulting inverse apply, as
the resulting ranks of $\ttA[M]$ and $\ttA[Y]$ are observed to
be smaller than those of their product.

For the sake of comparison with the volume integral experiments in
\pr{ssc:tt-vie}, in \pr{tbl:TT-BIE-3} we report results for matrix
compression and preconditioner setup in \QTT\ form for $\acc_p =
\sci{-3}$, for a regular lattice of surfaces with spherical topology
and radius $\rho = 1 + 0.5Y_{4,3}$.
Unlike the cases in \pr{tbl:TT-3d-6,tbl:TT-3d-NTI-6}, maximum ranks
for both the forward matrix $\ttA[A]$ and the matrix $\ttA[Y]$ in the
preconditioner show a relatively slow but steady increase with problem
size $N$.
As a consequence, both timings and inverse memory display sublinear
growth with $N$.
We note that in terms of rank behavior and the scaling of
precomputation costs, the results presented in \pr{tbl:TT-BIE-3} are
representative of all experiments presented in this section.
\begin{table}[!tb]
  \newcolumntype{M}{>{\centering\let\newline\\\arraybackslash\hspace{0pt}$}c<{$}}
  \newcolumntype{N}[1]{>{\centering\let\newline\\\arraybackslash\hspace{0pt}}D{.}{.}{#1}}
  \newcolumntype{L}{>{$\collectcell\num}r<{\endcollectcell$}}
  %
  %
  \centering
  \small
  \begin{tabular}{L N{3.2} N{3.2} M M N{1.2}}\toprule
    \multicolumn{1}{c}{\multirow{2}{*}[-5pt]{$N$}}
    &\multicolumn{2}{c}{Time~(sec)}
    &\multicolumn{2}{c}{Max Rank}
    &\multicolumn{1}{c}{\multirow{2}{*}[-5pt]{\parbox[c]{1.0in}{\centering Inverse Memory ~(\abbrev{MB})}}}
    \\\cmidrule(lr){2-3}\cmidrule(lr){4-5}
    \multicolumn{1}{c}{}
    &\multicolumn{1}{c}{\parbox[c]{.6in}{\centering Compress}}
    &\multicolumn{1}{c}{\parbox[c]{.6in}{\centering Invert}}
    &\text{Forward}
    &\text{Inverse}
    &
    \\ \midrule
    3840    & 27.75 & 375.49 & 117  & 86~(49) & 8.56               \\
    30720   & 34.80 & 512.18 & 133  & 90~(49) & 11.73              \\
    245760  & 45.42 & 630.46 & 144  & 93~(50) & 14.25              \\
    1996080 & 49.50 & 732.21 & 150  & 95~(50) & 18.75              \\
     \bottomrule
  \end{tabular}
  \captionsetup{width=.85\linewidth}
  \mcaption{tbl:TT-BIE-3}{\QTT\ preconditioner ranks and
    precomputation costs for the benchmark case}{Compression and
    Inversion times, maximum {\QTT} ranks, and memory
    requirements for the {\QTT} decomposition algorithms for
    benchmark case on a regular cubic lattice.
    The {\QTT} inverse is a product of two {\QTT}
    matrices, a block-diagonal system for spherical surfaces $\ttA[M]$
    and the solution for the right-preconditioned system $\ttA[X]$, as
    discussed in \pr{ssc:inverse}.
    We include maximum ranks for both {\QTT} matrices,
    reporting those for block-diagonal $\mathtt{M}$ in parentheses,
    and reporting the aggregate memory requirements for both under
    ``Inverse Memory''.
    The problem sizes range from $N=\num{3840}~(n=8,p=15)\text{ to
    }\num{1996080}~(n=4096, p=15)$, and the target accuracy for all
    algorithms to construct the {\QTT} preconditioner is set
    to $\acc_p=\sci{-3}$.
    The model surface has radius $\rho = 1 + 0.5Y_{4,3}$.
  }
\end{table}

\subsubsection{Comparison with other preconditioners}
Multigrid, sparsifying preconditioners \cite{Quaife2013a, Ying2014},
and hierarchical matrix preconditioners (using \abbrev{HSS-C},
\HIF, \abbrev{IFMM} \cite{IFMM2015prec}, or $\Hmat$ inverse
compression at low target accuracies) are a few options available for
this type of problem.

We compare \QTT's setup costs (inversion time and memory
requirements), its effectiveness in terms of iteration count for the
iterative solver, and total solve time with those of multigrid and
\HIF.
We use a two-level multigrid V-cycle, considered in
\cite{Quaife2013a}, as preconditioner.
Levels in this context are defined by the spherical harmonic order
$p$.
In our experiments, we choose $p_\lbl{coarse} =\ceil{p/2}$ as the
coarse level.
We use the natural spectral truncation and padding for restriction and
prolongation operators, and for smoothing, we use Picard iteration.
The coarse-grid problems are solved iteratively using \GMRES\ with tolerance
$\acc_p$.
For \HIF\ \cite{ho2015hierarchical_ie}, we use a Matlab implementation
kindly provided to us by Kenneth~Ho and Lexing~Ying.

For elliptic {\PDE}s, multigrid provides an acceleration to the iterative
solvers with almost negligible setup costs and storage requirements;
however, its performance for integral equations is less understood.
Fast direct solvers based on hierarchical matrix compression like \HIF,
on the other hand, have significantly large setup costs.
Nevertheless, when preconditioners of the latter kind are affordable
to construct for low accuracies, they present an effective
preconditioner, reducing the number of iterations while incurring
small cost associated with the application of the preconditioner,
because of their quick apply.

Since \QTT\ algorithms also provide a hierarchical decomposition of
the inverse, we expect their performance to be similar to fast direct
solvers such as \HIF. We also anticipate that, due to its modest setup
costs, it will enable the solution of problems with a large number of
unknowns.

\subsubsection{Overview of experiments}
In order to successfully test the \QTT\ preconditioner, we first
establish a benchmark case, choosing a regular, cubic lattice of
identical translates as our geometry. We expect this to be the most
advantageous case for the \QTT, as it is able to exploit any
regularities present in the integration domain. We report our results
in terms of matrix-vector applies required for the solve
(\pr{tbl:TT-BIE-iters-d4c0}).
Since the multigrid approach proves to be limited in its
effectiveness, we concentrate on a more detailed comparison of solve
times against setup costs between \QTT\ and
\HIF\ (\pr{fig:SlvMV-Y43-15-D4C0}).

We then devise two stress tests to determine the robustness of the
\QTT\ approach. First, we eliminate the regularity in the cubic
lattice by randomly perturbing centers, radii and types of the
surfaces that constitute it. As expected, a moderate increase in
precomputation costs is observed for the
\QTT\ (\pr{fig:SlvMV-Y43-15-d4c7}).
However, except for the lowest
preconditioner accuracies, its performance in terms of iteration
counts does not degrade when compared to the benchmark.

Finally, we subject both preconditioners to a series of experiments
with progressively worse conditioning (we draw the surfaces in the
lattice close to contact).
However, for preconditioner accuracies $\acc_p \leq 10^{-2}$,
iteration counts and solve times for both preconditioners become
practically independent of distance between surfaces, providing
increasing speed-ups when compared to the unpreconditioned solve. We
display results for $\acc_p = 10^{-3}$
(\pr{fig:SpUp-vs-logdst,fig:StpandMem-vs-logdst}).

\subsubsection{Benchmark}
We test a cubic lattice of $q \times q \times q$ surfaces, for $n=q^3
\in\{8,64,512,4096\}$, discretizing each surface using spherical
harmonic basis of order $p\in\{15,24,35\}$, which requires $p+1$
collocation points in the latitude direction and $2p$ collocation
points in the longitude direction. The total number of unknowns for each problem is then $N = 2p(p+1)
n$. We make all surfaces translations of a single shape on a regular lattice with spacing of $4$, which implies
the closest distance between surfaces is slightly smaller than their diameter.
To control surface complexity, we make their radius $\rho(\theta,\phi)
= 1 + 0.5Y_{\ell m}(\theta,\phi)$, where $Y_{\ell m}$ is the spherical
harmonic function of order $(\ell,m)$.

For the matrix-vector apply, we use the \abbrev{FMMLIB3D} library
\cite{toolboxfmm} for interactions between surfaces, and spectral
quadrature for interactions within each surface \cite{graham2002fully}.
We test three surfaces of increasing complexity by setting $\ell \in
\{ 4,8,16 \}$ and $m=\ell-1$, shown in \pr{fig:model-surfaces}.

The results of the tests on this lattice are reported in
\pr{tbl:TT-BIE-iters-d4c0}.
For all problem sizes, we compare the number of matrix-vector
applications for the unpreconditioned solve with those of the
preconditioned solve with multigrid, \QTT, and \HIF.
Each approximate-inverse preconditioner is constructed for a target
accuracy of $\acc_p = \sci{-3}$ and is used within a \GMRES\ solve with a
tolerance of $\acc=\sci{-8}$ for relative residual and no restart.
\begin{table}[!tb]
  \newcolumntype{M}{>{$}c<{$}}
  \newcolumntype{N}{>{$\collectcell\num}r<{\endcollectcell$}}
  \centering
  \small
  \begin{tabular}{ MNN MMM MMM MMM MMM}\toprule
    \multirow{2}{*}[-3pt]{$p$}
    &\multicolumn{1}{c}{\multirow{2}{*}[-3pt]{$n$}}
    &\multicolumn{1}{c}{\multirow{2}{*}[-3pt]{$N$}}
    &\multicolumn{3}{c}{Unpreconditioned}
    &\multicolumn{3}{c}{Multigrid}
    &\multicolumn{3}{c}{\QTT}
    &\multicolumn{3}{c}{\HIF}
    \\\cmidrule(lr){4-6} \cmidrule(lr){7-9} \cmidrule(lr){10-12} \cmidrule(lr){13-15}
       & \multicolumn{1}{c}{} & \multicolumn{1}{c}{} & \ell=4 & 8   & 16   & 4         & 8         & 16       & 4  & 8  & 16 & 4  & 8  & 16 \\\midrule
    15 & 8                    & 3840                 & 129    & 401 & 1001 & 25~(140)  & -         & -        & 9  & 9  & 9  & 7  & 7  & 7  \\
    15 & 64                   & 30720                & 153    & 255 & 991  & 25~(140)  & -         & -        & 11 & 11 & 9  & 7  & 7  & 9  \\
    15 & 512                  & 245760               & 141    & 277 & 788  & 25~(140)  & -         & -        & 11 & 11 & 11 & 7  & 11 & 7  \\\midrule
    24 & 8                    & 9600                 & 153    & 229 & 617  & 21~(92)\0 & 37~(180)  & -        & 9  & 13 & 11 & 7  & 7  & 7  \\
    24 & 64                   & 76800                & 141    & 207 & 677  & 21~(114)  & 37~(180)  & -        & 11 & 13 & 15 & 9  & 7  & 7  \\
    24 & 512                  & 614400               & 141    & 205 & 577  & 21~(103)  & 37~(180)  & -        & 11 & 11 & 15 & 15 & 13 & 13 \\\midrule
    35 & 8                    & 20160                & 139    & 429 & 601  & 17~(16)\0 & 25~(84)\0 & 25~(140) & 9  & 11 & 11 & 7  & 7  & 9  \\
    35 & 64                   & 161280               & 143    & 187 & 651  & 17~(16)\0 & 25~(73)\0 & 25~(140) & 9  & 11 & 11 & 9  & 7  & 9  \\
    35 & 512                  & 1290240              & 137    & 193 & 595  & 17~(16)\0 & 25~(79)\0 & 25~(140) & 9  & 11 & 11 & -  & -  & -  \\\bottomrule
  \end{tabular}
  \mcaption{tbl:TT-BIE-iters-d4c0}{Matrix-vector apply count for
    the benchmark case}{Comparison of unpreconditioned \GMRES\ and
    preconditioned \GMRES\ using multigrid, \QTT, and \HIF\ preconditioners
    for the exterior \threed\ Laplace problem over a regular lattice of $n$
    model surfaces each with radius $\rho = 1 + 0.5Y_{\ell m}$ where
    $\ell= \{ 4,8,16 \}$ and $m=\ell-1$.
    Surfaces are represented in spherical harmonics basis and
    discretized with $2p(p+1)$ collocation points.
    The relative residual tolerance for \GMRES\ is set to $\acc =
    \sci{-8}$ and \GMRES\ is not restarted.
    For multigrid, the number of applies in the coarse \GMRES\ solver
    (with $\acc_p = \sci{-2}$) is reported in parentheses.
    \QTT\ and \HIF\ approximate-inverse preconditioners are constructed
    with a target accuracy of $\acc_p = \sci{-3}$.
    Empty entries correspond to experiments in which either the
    preconditioned \GMRES\ failed to converge in $100$ iterations or
    preconditioner setup costs were excessive.
    }
\end{table}

From the results in \pr{tbl:TT-BIE-iters-d4c0}, it can be readily
observed that in this configuration the condition number of the
problem, implied by number of iterations of the unpreconditioned
solve, mostly depends on the individual surface complexity and not the
number of surfaces $n$ or problem size $N$.

As we increase surface complexity (controlled by $\ell$), the average
number of matrix-vector applies goes from $140$ to $240$ to $600$,
rendering the unpreconditioned solve impractical.
The multigrid preconditioner is easy and inexpensive to setup, but, as
it is mentioned in \cite{Quaife2013a}, its performance suffers when
the geometry is not resolved in the coarse grid, i.e. $p_\lbl{coarse}$
is not large enough.
In \pr{tbl:TT-BIE-iters-d4c0}, we observe that after the individual
surface geometry is resolved in the coarse grid, multigrid leads to a
reduction in the iteration counts.
However, this requires unnecessary over-resolution in the fine grid
that is not required by the problem but by the preconditioner.

Furthermore, note that each multigrid cycle uses two matrix-vector
applies at the fine level as well as a number of applies at the coarse
level, and so the observed speedups with respect to the
unpreconditioned solve are moderate.
Additionally, for cases where both fine and coarse levels are
resolved, such as $p=35$ and $\ell=4$, the number of coarse applies is
still higher than the corresponding \QTT\ and \HIF\ direct solvers at the
level equal to the multigrid coarse level ($p=15$ and $\ell=4$).
Due to its mediocre performance, we choose not to further consider
multigrid for our comparisons, focusing instead on comparing \QTT\ and
\HIF\ direct-solver preconditioners.

As expected, both \QTT\ and \HIF\ display consistent performance across
all cases considered and they display little or no dependence on
surface complexity $\ell$, lattice size $n$, and number of
unknowns $N$.
For both approaches, the cost of an iteration is one matrix-vector
apply and one fast apply of the corresponding compressed low-accuracy
inverse.
For most examples considered, applying the preconditioner takes only a
fraction of the matrix-vector apply, allowing for considerable
speedups.

In the following section, we explore different aspects of \QTT\ and
\HIF\ preconditioners.

\subsubsection{Comparison of direct-solver preconditioners}
For a given model surface and problem size $N$, we report the setup
costs (inversion time and storage) and the solve times using both
direct solvers with inversion accuracies from $\acc_p =\sci{-1} \text{
  to } \sci{-3}$.
In order to represent wall-clock time in a more meaningful way across
experiments of different sizes, we normalize it in terms of the \O{N}
matrix-vector applies for the system matrix through \abbrev{FMM}, hereinafter
denoted by \tmatvec.
For $N>\num{500000}$, high inversion cost generally prevents us from
constructing the \HIF\ preconditioner.

In \pr{fig:SlvMV-Y43-15-D4C0}, we plot solve times against setup costs
(inversion time and storage) for both \QTT\ and \HIF\ preconditioners.
This allows us to identify distinct trade-offs in performance and
efficiency, as well as to observe their scaling with respect to $N$
and $\acc_p$.
\begin{figure}[!bt]
  \centering
  \small
  \setlength\figureheight{.35\textwidth}
  \setlength\figurewidth{.5\textwidth}
  
  \beginpgfgraphicnamed{SlvMV_Y43_D4C0}%
  \tikzsetnextfilename{tikz-external-SlvMV_Y43_D4C0}%
  \scalebox{1}{\begin{tikzpicture}
  \begin{groupplot}[%
      ,group style={rows=1,columns=2,group name=costplot}
      ,width=\figurewidth
      ,height=\figureheight
      ,ymajorgrids
      ,xmajorgrids
      ,ytick scale label code/.code={v}
      ,xmode=log
      ,xmin=0.01
      ,xmax=10000
      ,xminorticks=true
      ,ymin=0
      ,ymax=35
      ,ytick={ 5, 10, 15, 20, 25, 30, 35}
      ,legend style={legend columns=4,font=\footnotesize}
      ,legend cell align=left
    ]
    \nextgroupplot[%
      ,legend to name=costplot-legend
      ,mark=none
      ,xlabel={$\log_{10}\left({\text{Precond. Setup}}/{\tmatvec}\right)$}
      ,ylabel={Solve Time/\tmatvec}
    ]
    \addplot [color=plt-lc1,dash pattern=on 4pt off 2pt,line width=1.0pt,mark size=2.0pt,mark=o,mark options={solid}]
    table[row sep=crcr]{22.9553090669269	30.8572790185025\\
      41.8251677083952	14.000765394825\\
      108.386129356536	11.9044846331775\\
      534.125769925782	12.5951745620851\\
      4659.4793220683	12.8378771733164\\
    };\addlegendentry{TT $(N=\num{3840})$}
    \addplot [color=plt-lc2,dash pattern=on 4pt off 2pt,line width=1.0pt,mark size=2.0pt,mark=triangle,mark options={solid}]
    table[row sep=crcr]{1.98845759676409	25.4083785035274\\
      4.13068144614537	17.3843039030592\\
      12.6070284317619	16.0173322396541\\
      67.2857211549693	13.006898290961\\
      503.520953355223	12.5164168115806\\
    };\addlegendentry{TT $(N=\num{30720})$}
    \addplot [color=plt-lc3,dash pattern=on 4pt off 2pt,line width=1.0pt,mark size=2.5pt,mark=diamond,mark options={solid}]
    table[row sep=crcr]{0.244648296286241	25.2116665353281\\
      0.503623606787123	17.3192079953477\\
      1.60741202995278	15.9711797143472\\
      8.74197638981873	12.9970894736854\\
      59.9743682888103	12.6194238299524\\
    };\addlegendentry{TT $(N=\num{245760})$}
    \addplot [color=plt-lc4,dash pattern=on 4pt off 2pt,line width=1.0pt,mark size=2.5pt,mark=x,mark options={solid}]
    table[row sep=crcr]{0.0313004399535461	24.7021581443837\\
      0.0642139565931514	17.0171076537273\\
      0.204077033522939	15.7562788756599\\
      1.13338799834556	13.0087289470275\\
      10.5672977029331	13.1962476751256\\
    };\addlegendentry{TT $(N=\num{1996080})$}
    \addplot [color=plt-lc1,solid,line width=1.0pt,mark size=2.0pt,mark=o,mark options={solid}]
    table[row sep=crcr]{7.23309211477306	15.04\\
      10.9677410463647	8.55\\
      13.1422453421302	7.89\\
      17.4335784486864	7.16\\
      18.0157304511087	5.07\\
    };\addlegendentry{HIF $(N=\num{3840})$}
    \addplot [color=plt-lc2,solid,line width=1.0pt,mark size=2.0pt,mark=triangle,mark options={solid}]
    table[row sep=crcr]{13.9075094312641	15.35\\
      16.970362150202	11.54\\
      19.6934490951804	9.33\\
      21.5058979739766	7.27\\
      23.3250227979741	7.22\\
    };\addlegendentry{HIF $(N=\num{30720})$}
    \addplot [color=plt-lc3,solid,line width=1.0pt,mark size=2.4pt,mark=diamond,mark options={solid}]
    table[row sep=crcr]{84.0128011483136	17.34\\
      89.0766307464797	15.43\\
      92.3611912921938	9.83\\
      96.2215006456474	7.5\\
      100.955550030095	7.48\\
    };\addlegendentry{HIF $(N=\num{245760})$}
    \nextgroupplot[mark=none,xlabel={$\log_{10}$(Precond Memory) [MB]},yticklabels={\empty}]
    \addplot [color=plt-lc1,dash pattern=on 4pt off 2pt,line width=1.0pt,mark size=2.0pt,mark=o,mark options={solid}]
    table[row sep=crcr]{0.01019287109375	30.8572790185025\\
      0.10760498046875	14.000765394825\\
      0.6959228515625	11.9044846331775\\
      2.46687316894531	12.5951745620851\\
      8.56077575683594	12.8378771733164\\
    };
    \addplot [color=plt-lc2,dash pattern=on 4pt off 2pt,line width=1.0pt,mark size=1.3pt,mark=triangle,mark options={solid}]
    table[row sep=crcr]{0.019317626953125	25.4083785035274\\
      0.126663208007812	17.3843039030592\\
      0.93804931640625	16.0173322396541\\
      4.20570373535156	13.006898290961\\
      11.7257537841797	12.5164168115806\\
    };
    \addplot [color=plt-lc3,dash pattern=on 4pt off 2pt,line width=1.0pt,mark size=3.5pt,mark=diamond,mark options={solid}]
    table[row sep=crcr]{0.024932861328125	25.2116665353281\\
      0.14605712890625	17.3192079953477\\
      1.26962280273438	15.9711797143472\\
      5.41299438476562	12.9970894736854\\
      14.2541198730469	12.6194238299524\\
    };
    \addplot [color=plt-lc4,dash pattern=on 4pt off 2pt,line width=1.0pt,mark size=2.0pt,mark=x,mark options={solid}]
    table[row sep=crcr]{0.0297347193286361	24.7021581443837\\
      0.174186495357224	17.0171076537273\\
      1.46795654296875	15.7562788756599\\
      6.45491027832031	13.0087289470275\\
      18.7480621337891	13.1962476751256\\
    };
    \addplot [color=plt-lc1,solid,line width=1.0pt,mark size=2.0pt,mark=o,mark options={solid}]
    table[row sep=crcr]{14.6290664672852	15.04\\
      32.1344757080078	8.55\\
      46.0841903686523	7.89\\
      53.090705871582	7.16\\
      57.5647506713867	5.07\\
    };
    \addplot [color=plt-lc2,solid,line width=1.0pt,mark size=2.0pt,mark=triangle,mark options={solid}]
    table[row sep=crcr]{115.995407104492	15.35\\
      272.313339233398	11.54\\
      391.554496765137	9.33\\
      472.849166870117	7.27\\
      557.004211425781	7.22\\
    };
    \addplot [color=plt-lc3,solid,line width=1.0pt,mark size=2.5pt,mark=diamond,mark options={solid}]
    table[row sep=crcr]{943.885856628418	17.34\\
      2277.46474456787	15.43\\
      3695.83222961426	9.83\\
      4829.26739501953	7.5\\
      6236.88101959229	7.48\\
    };
  \end{groupplot}
  \node at ($(costplot c1r1.north)!.5!(costplot c2r1.north)$) [anchor=south] {\ref{costplot-legend}};
  \node at (costplot c1r1.north) [anchor=south, yshift=-.11\figureheight, xshift=.35\figurewidth]{(a)};
  \node at (costplot c2r1.north) [anchor=south, yshift=-.11\figureheight, xshift=.35\figurewidth]{(b)};
\end{tikzpicture}
  \endpgfgraphicnamed%
\\
  \mcaption{fig:SlvMV-Y43-15-D4C0}{Solve time vs. preconditioner
    setup time and memory for the benchmark case}{ Semi-log plots for
    solve times, normalized by the \matvec\ time \tmatvec, vs. the required
    inversion time, also normalized by \tmatvec, and storage
    requirements (in \abbrev{MB}) for \QTT\ and \HIF\ preconditioners for $p=15$,
    $n=\{8,64,512,4096\}$ and $\acc_p =\sci{-1} \text{ to }
    \sci{-3}$.
    Model surface has radius $\rho = 1 + 0.5Y_{4,3}$.
    Because of $\log(N)$ complexity of \QTT\ compression and inversion,
    \QTT\ scheme becomes cheaper for larger problem sizes.
    }
\end{figure}

\para{Solve time and speedup} As we increase the preconditioner
accuracy $\acc_p$, the number of iterations for the solve
decreases while the cost of applying the corresponding preconditioner
increases as \QTT\ and \HIF\ ranks increase.
Both provide considerable speedups (unpreconditioned solve takes about
$140\tmatvec$), with \HIF\ mostly yielding the highest when available.
The main reason behind this is that the \HIF\ apply has a more favorable
dependence on $\acc_p$ than the \QTT\ apply.
This is evident by the fact that the rise in preconditioner
application cost causes the \QTT\ solve time to plateau but it does not
significantly affect the speedups yielded by \HIF\ for the range of
accuracies considered. For higher accuracies ($\acc_p \lesssim \sci{-4}$), we observe a slight increase in solve times for \QTT\, as well as significant increases to setup costs for both solvers. 

\para{Inverse setup time}
\reftext[(a)]{fig:SlvMV-Y43-15-D4C0}{\figkeyword~} shows the solve
time versus the setup time as $\acc_p$ increases for different $N$.
While \HIF\ setup costs relative to \tmatvec\ increase as we increase
the problem size, logarithmic scaling of the \QTT\ inverse makes it
more efficient (relative to \tmatvec) as $N$ grows.
This is one of the features that allows us to compute the
\QTT\ for large $N$, and it implies that for sufficiently large problem
($N=\num{245760}$ in this example), inverse setup can become cheaper
than one matrix apply.

\para{Inverse storage}
\reftext[(b)]{fig:SlvMV-Y43-15-D4C0}{\figkeyword~} depicts the required
storage for each preconditioner as $\acc_p$ increases.
In this respect, \QTT\ is extremely efficient and memory requirements have
logarithmic scaling with respect to $N$ and do not exceed $100$~\abbrev{MB}.
On the other hand, the \HIF\ inverse displays $\O{N \log N}$ scaling
with large prefactor, requiring $10$s to $100$s of \abbrev{GB} of
memory to solve problems larger than a quarter million unknowns.

\para{Dependence on surface complexity} As we increase the surface
complexity by increasing the radial perturbation, we observe little to
no difference in iteration counts (\pr{tbl:TT-BIE-iters-d4c0}) as well
as the solve times for both \QTT\ and \HIF. Thus, both alleviate the
increase in condition number, evident by the increase in the
corresponding number of unpreconditioned iterations.
Though an increase in ranks and consequently in setup costs for the
\QTT\ inverse is observed, differences in rank distributions sharply
decrease with $\acc_p$, e.g., maximum ranks for $(\ell,m)=(8,7)$ case
are roughly $3\times$ higher for $\acc_p=\sci{-1}$ and $1.5\times$ for
$\acc_p=\sci{-3}$.
The increase in costs for the \HIF\ is predictably small due to the
fact that it focuses on compressing far range interactions.

\para{Effectiveness for different $\acc$ and $\acc_p$} To investigate
the robustness of the preconditioners for higher target accuracies, in
\pr{tbl:TT-BIE-targetacc}, we report how solve times vary across a
range of target and preconditioner accuracies.
Overall, the solve times (relative to \tmatvec) for both solvers seem to be
proportional to $\log(\acc)$, and their performance with respect to
$\acc_p$ seems to replicate the case observed in
\pr{fig:SlvMV-Y43-15-D4C0} (which corresponds to $\acc=\sci{-8}$).
This indicates that these direct solvers are extremely reliable as
preconditioners.
\begin{table}[!bt]
  \newcolumntype{M}{>{\centering\let\newline\\\arraybackslash\hspace{0pt}$}c<{$}}
  \newcolumntype{R}{>{\centering\let\newline\\\arraybackslash\hspace{0pt}$}r<{$}}
  \centering
  \small
  \begin{tabular}{MM RRRRR RRRRR}\toprule
    \multirow{2}{*}[-3pt]{$\acc$}
    &\multirow{2}{*}[-3pt]{\parbox[c]{.45in}{\centering Unprec.\\\Matvec{s}}}
    &\multicolumn{5}{c}{\QTT}
    &\multicolumn{5}{c}{\HIF}
    \\\cmidrule(lr){3-7} \cmidrule(lr){8-12}
               &      & \acc_p=\sci{-1} & \sci{-1.5} & \sci{-2} & \sci{-2.5} & \sci{-3} & \sci{-1} & \sci{-1.5} & \sci{-2} & \sci{-2.5} & \sci{-3} \\\midrule
     \sci{-4\0} & \081 & 13.2                    & 9.2       & 8.4     & 6.1       & 7.4     & 7.3     & 6.5       & 5.7     & 5.7       & 5.7     \\
     \sci{-6\0} & 107  & 19.4                    & 14.8      & 13.3    & 11.1      & 10.6    & 13.5    & 10.9      & 8.0     & 5.7       & 5.7     \\
     \sci{-8\0} & 141  & 25.5                    & 20.9      & 18.2    & 13.7      & 13.8    & 17.7    & 15.3      & 10.3    & 8.0       & 8.0     \\
     \sci{-10}  & 165  & 33.2                    & 25.4      & 23.0    & 17.2      & 17.0    & 22.4    & 16.6      & 12.7    & 10.3      & 10.3    \\
     \sci{-12}  & 187  & 39.0                    & 30.5      & 27.9    & 20.9      & 20.2    & 28.9    & 22.8      & 17.0    & 12.6      & 10.3    \\
     \bottomrule
  \end{tabular}
  \captionsetup{width=.95\linewidth}
  \mcaption{tbl:TT-BIE-targetacc}{Solve time for varying target and
    preconditioner accuracies}{
    Comparison of solve times relative to \tmatvec\
    for \QTT\ and \HIF\ preconditioners for target accuracies $\acc =
    \sci{-4} \text{ to } \sci{-12}$ and preconditioner accuracies
    $\acc_p =\sci{-1} \text{ to } \sci{-3}$ for the model surface with
    radius $\rho = 1 + 0.5Y_{4,3}$, and problem size $N=\num{245760}$
    ($n=512, p=15$).
    \Matvec\ counts for the unpreconditioned solve are also included for
    reference.}
\end{table}

\subsubsection{Perturbations of the benchmark}
To further quantify the effectiveness of the \QTT\ preconditioner, we
perform experiments in which we perturb the uniform cubic lattice
considered in the benchmark.
Given the ineffectiveness of the multigrid preconditioner presented
above, we only focus on the \HIF\ preconditioner for comparison.
Since one expects the \QTT\ decomposition to exploit regularities and
invariances in the geometry, this set of tests is aimed to measure its
robustness when the regularity of the lattice is broken.

We construct each surface in the lattice with a radius of the form
$\rho = 1 + r Y_{\ell m}$, where $r$ is a random number in $(0,1)$,
and $(\ell,m)$ is also randomly chosen between $(4,3)$ and
$(8,7)$.
Additionally, we perturb the location of the center of each surface in
a random direction by up to $50$ percent of its diameter.

The results of this test are shown in \pr{fig:SlvMV-Y43-15-d4c7}. We
observe similar behavior to the one seen in \pr{fig:SlvMV-Y43-15-D4C0}
for both solvers in terms of how solve times and setup costs behave as
functions of the number of surfaces and preconditioner accuracy
$\acc_p$. Comparing the corresponding data for the \QTT\ solver in these two
figures, we observe a moderate increase in inverse setup times and
storage for $n=256, \num{4096}\, (N \geq \num{245760})$.
This has an impact on the effective solve times in terms of {\matvec}s,
as the preconditioner apply is a bit more expensive. We also observe
that for both \QTT\ and \HIF, $\acc_p \simeq \sci{-1}$ seems to be
much less effective than in the benchmark case.
However, for $\acc_p \leq \sci{-2}$, iteration counts and solve times
display similar behavior to the benchmark case.

\begin{figure}[!tb]
  \centering
  \small
  \setlength\figureheight{.35\textwidth}
  \setlength\figurewidth{.5\textwidth}
  
  \beginpgfgraphicnamed{SlvMV_D4C7_15_multi}%
  \tikzsetnextfilename{tikz-external-SlvMV_D4C7_15_multi}%
  \scalebox{1}{\begin{tikzpicture}
  \begin{groupplot}[%
      ,group style={rows=1,columns=2,group name=cost-rand}
      ,width=\figurewidth
      ,height=\figureheight
      ,ymajorgrids
      ,xmajorgrids
      ,ytick scale label code/.code={v}
      ,xmode=log
      ,xminorticks=true
      ,ymin=0
      ,ymax=80
      ,legend style={legend columns=4,font=\footnotesize}
      ,legend cell align=left
    ]
    \nextgroupplot[%
      ,legend to name=rand-legend
      ,mark=none
      ,xmin=0.05
      ,xmax=50000
      ,xlabel={$\log_{10}\left({\text{Precond. Setup}}/{\tmatvec}\right)$}
      ,ylabel={Solve Time/\tmatvec}
    ]
    \addplot [color=plt-lc1,dash pattern=on 4pt off 2pt,line width=1.0pt,mark size=2.0pt,mark=o,mark options={solid}]
    table[row sep=crcr]{
      62.45      31.3064\\
      198.59     18.1657\\
      1265.70    15.3508\\
      2542.53    13.0975\\
      6171.52    11.6969\\
    };\addlegendentry{TT $(N=\num{3840})$}
    \addplot [color=plt-lc2,dash pattern=on 4pt off 2pt,line width=1.0pt,mark size=2.0pt,mark=triangle,mark options={solid}]
    table[row sep=crcr]{
      6.39      43.9279\\
      17.67     19.9255\\
      96.21     16.7710\\
      170.56    14.6624\\
      480.34    12.9338\\
    };\addlegendentry{TT $(N=\num{30720})$}
    \addplot [color=plt-lc3,dash pattern=on 4pt off 2pt,line width=1.0pt,mark size=2.5pt,mark=diamond,mark options={solid}]
    table[row sep=crcr]{
      0.77      67.5851\\
      2.84      21.4760\\
      10.63     19.4705\\
      28.27     17.4594\\
      89.21     17.2600\\
    };\addlegendentry{TT $(N=\num{245760})$}
    \addplot [color=plt-lc4,dash pattern=on 4pt off 2pt,line width=1.0pt,mark size=2.5pt,mark=x,mark options={solid}]
    table[row sep=crcr]{
      0.11      80.0000\\
      0.39      24.7098\\
      1.29      17.8842\\
      7.76      18.2962\\
      29.51     20.2796\\
    };\addlegendentry{TT $(N=\num{1996080})$}
    \addplot [color=plt-lc1,solid,line width=1.0pt,mark size=2.0pt,mark=o,mark options={solid}]
    table[row sep=crcr]{
      8.4301    15.282\\
      12.8051	  9.0699\\
      16.3133	  9.1995\\
      17.2379	  7.2680\\
      18.5764	  7.2521\\
    };\addlegendentry{HIF $(N=\num{3840})$}
    \addplot [color=plt-lc2,solid,line width=1.0pt,mark size=2.0pt,mark=triangle,mark options={solid}]
    table[row sep=crcr]{
      12.7600 15.3834\\
      15.4032 18.5055\\
      17.2126  9.3022\\
      18.3419  7.2096\\
      20.0557  7.2086\\
    };\addlegendentry{HIF $(N=\num{30720})$}
    \addplot [color=plt-lc3,solid,line width=1.0pt,mark size=2.5pt,mark=diamond,mark options={solid}]
    table[row sep=crcr]{
      77.1253 43.7022\\
      79.3136 47.0735\\
      80.2891 13.7421\\
      83.9392  7.3368\\
      87.7000  7.6033\\
    };\addlegendentry{HIF $(N=\num{245760})$}
    \nextgroupplot[%
      ,mark=none
      ,xmin=0.01
      ,xmax=10000
      ,xlabel={$\log_{10}$(Precond Memory) [MB]}
      ,yticklabels={\empty}]
    \addplot [color=plt-lc1,dash pattern=on 4pt off 2pt,line width=1.0pt,mark size=2.0pt,mark=o,mark options={solid},forget plot]
    table[row sep=crcr]{
      0.06     31.31\\
      1.07     18.17\\
      3.21     15.35\\
      4.88     13.10\\
      8.01     11.70\\
    };
    \addplot [color=plt-lc2,dash pattern=on 4pt off 2pt,line width=1.0pt,mark size=2.0pt,mark=triangle,mark options={solid},forget plot]
    table[row sep=crcr]{
      0.07     43.93\\
      0.94     19.93\\
      3.09     16.77\\
      4.95     14.66\\
      8.61     12.93\\
    };
    \addplot [color=plt-lc3,dash pattern=on 4pt off 2pt,line width=1.0pt,mark size=2.0pt,mark=diamond,mark options={solid},forget plot]
    table[row sep=crcr]{
      0.09     67.59\\
      1.24     21.48\\
      3.35     19.47\\
      5.14     17.46\\
      11.48     17.26\\
    };
    \addplot [color=plt-lc4,dash pattern=on 4pt off 2pt,line width=1.0pt,mark size=2.5pt,mark=x,mark options={solid},forget plot]
    table[row sep=crcr]{
      0.12     80.00\\
      1.90     24.71\\
      4.14     17.88\\
      12.21     18.30\\
      38.65     20.28\\
    };
    \addplot [color=plt-lc1,solid,line width=1.0pt,mark size=2.0pt,mark=o,mark options={solid},forget plot]
    table[row sep=crcr]{
      19.26     15.28\\
      38.31      9.07\\
      48.39      9.20\\
      54.58      7.27\\
      59.77      7.25\\
    };
    \addplot [color=plt-lc2,solid,line width=1.0pt,mark size=2.0pt,mark=triangle,mark options={solid},forget plot]
    table[row sep=crcr]{
      136.22     15.38\\
      302.35     18.51\\
      415.46      9.30\\
      487.06      7.21\\
      573.06      7.21\\
    };
    \addplot [color=plt-lc3,solid,line width=1.0pt,mark size=2.0pt,mark=diamond,mark options={solid},forget plot]
    table[row sep=crcr]{
      1130.57     43.70\\
      2558.85     47.07\\
      3786.83     13.74\\
      5002.30      7.34\\
      6567.29      7.60\\
    };
  \end{groupplot}
  \node at ($(cost-rand c1r1.north)!.5!(cost-rand c2r1.north)$) [anchor=south] {\ref{rand-legend}};
  \node at (cost-rand c1r1.north) [anchor=south, yshift=-.11\figureheight, xshift=.35\figurewidth]{(a)};
  \node at (cost-rand c2r1.north) [anchor=south, yshift=-.11\figureheight, xshift=.35\figurewidth]{(b)};
\end{tikzpicture}
  \endpgfgraphicnamed%
\\
  \mcaption{fig:SlvMV-Y43-15-d4c7}{Solve vs. preconditioner setup and
    memory costs for irregular lattice and shapes}{Semi-log plots for
    Solve times, normalized by \matvec\ time \tmatvec, for a given
    inversion time, also normalized by \tmatvec, and storage
    requirements (in \abbrev{MB}) for \TT\ and \HIF\ preconditioners.
    Each model surface has radius $\rho = 1 + r Y_{\ell,\ell-1}$,
    where $\ell$ is randomly chosen to be 4 or 8 and $r$ is a
    random number in $(0,1)$.
    The location of the surfaces is also randomly perturbed.
    }
\end{figure}

\para{Distance between Surfaces}
As mentioned in \pr{sec:intro}, it is
well known that conditioning of second kind integral equations tends
to deteriorate as surfaces come close to contact. In order to test
robustness in performance of the \QTT\ and \HIF\ preconditioners, we compare speedups and setup costs as we
draw surface centers in the lattice close to each other in our experiments.

In \pr{fig:SpUp-vs-logdst}, we report the speedups with respect to the
average unpreconditioned solve, and plot them against the logarithm of
the distance between surfaces in the lattice.
Here, we use the number of unpreconditioned iterations as a surrogate
for the problem conditioner number.
As we draw the surfaces together, we observe that iteration counts and
solve times for both preconditioners show a slight increase for low
accuracy ($\acc_p \simeq \sci{-1}$), becoming almost independent of
distance for higher preconditioner accuracies ($\acc_p \leq
\sci{-2}$).
This causes the effective speedup to increase as the unpreconditioned
solve becomes more expensive, due to the increase in condition number.
\begin{figure}[!tb]
  \centering
  \small
  \setlength\figureheight{1.3in}
  \setlength\figurewidth{2.5in}
  
  \beginpgfgraphicnamed{SpUp_vs_logdst}%
  \tikzsetnextfilename{tikz-external-SpUp_vs_logdst}%
  \scalebox{1}{\begin{tikzpicture}
  \begin{axis}[%
      width=\figurewidth,
      height=\figureheight,
      scale only axis,
      xmode=log,
      xmin=0.4,
      xmax=100,
      ymajorgrids,
      xmajorgrids,
      xminorticks=true,
      xlabel={$\displaystyle -\log_{10}\left(\text{Distance}\right)$},
      ymin=0,
      ymax=100,
      ytick={ 0, 20, 40, 60, 80, 100},
      ylabel={Speedup},
      legend cell align=left,
      legend pos=outer north east,
      legend style={font=\footnotesize},
    ]
    \addplot [color=plt-lc1,solid,line width=1.0pt,mark size=2.0pt,mark=o,mark options={solid}]
    table[row sep=crcr]{0.549540873857625	27.6\\
      1.25892541179417	33.73\\
      3.31131121482591	66.09\\
      19.9526231496888	85.06\\
      50.1187233627272	87.05\\
    };\addlegendentry{HIF $(N=\num{3840})$}
    \addplot [color=plt-lc2,solid,line width=1.0pt,mark size=2.0pt,mark=triangle,mark options={solid}]
    table[row sep=crcr]{0.549540873857625	13.77\\
      1.25892541179417	17.75\\
      3.31131121482591	26.96\\
      19.9526231496888	25.29\\
      50.1187233627272	57.28\\
    };\addlegendentry{HIF $(N=\num{30720})$}
    \addplot [color=plt-lc3,solid,line width=1.0pt,mark size=2.5pt,mark=diamond,mark options={solid}]
    table[row sep=crcr]{0.549540873857625	15.35\\
      1.25892541179417	15.22\\
      3.31131121482591	12.14\\
      19.9526231496888	16.48\\
      50.1187233627272	35.14\\
    };\addlegendentry{HIF $(N=\num{245760})$}
    \addplot [color=plt-lc1,dash pattern=on 4pt off 2pt,line width=1.0pt,mark size=2.0pt,mark=o,mark options={solid}]
    table[row sep=crcr]{0.549540873857625	13.82\\
      1.25892541179417	13.54\\
      3.31131121482591	39.56\\
      19.9526231496888	40.22\\
      50.1187233627272	40.99\\
    };\addlegendentry{TT $(N=\num{3840})$}
    \addplot [color=plt-lc2,dash pattern=on 4pt off 2pt,line width=1.0pt,mark size=2.0pt,mark=triangle,mark options={solid}]
    table[row sep=crcr]{0.549540873857625	9.87\\
      1.25892541179417	7.09\\
      3.31131121482591	18.04\\
      19.9526231496888	14.81\\
      50.1187233627272	25.45\\
    };\addlegendentry{TT $(N=\num{30720})$}
    \addplot [color=plt-lc3,dash pattern=on 4pt off 2pt,line width=1.0pt,mark size=2.5pt,mark=diamond,mark options={solid}]
    table[row sep=crcr]{0.549540873857625	11.65\\
      1.25892541179417	6.37\\
      3.31131121482591	8.75\\
      19.9526231496888	10.36\\
      50.1187233627272	26.29\\
    };\addlegendentry{TT $(N=\num{245760})$}
    \addplot [color=plt-lc4,dash pattern=on 4pt off 2pt,line width=1.0pt,mark size=2.5pt,mark=x,mark options={solid}]
    table[row sep=crcr]{0.549540873857625	8.64\\
      1.25892541179417	5.75\\
      3.31131121482591	9.97\\
      19.9526231496888	13.5\\
      50.1187233627272	22.74\\
    };\addlegendentry{TT $(N=\num{1996080})$}
  \end{axis}
\end{tikzpicture}
  \endpgfgraphicnamed%
\\
  \captionsetup{width=.85\linewidth}
  \mcaption{fig:SpUp-vs-logdst}{Solve speedup vs. log distance between
    surfaces}{Speedups, defined as the ratio between unpreconditioned
    and preconditioned solves and excluding the preconditioner setup time
    (see \pr{fig:StpandMem-vs-logdst} for the setup costs), vs.
    log of the surface spacing in the lattice for \QTT\ and \HIF\ preconditioners with $\acc_p = \sci{-3}$.}
\end{figure}

\begin{figure}[!tb]
  \centering
  \small
  \setlength\figureheight{.36\textwidth}
  \setlength\figurewidth{.5\textwidth}
  
  \beginpgfgraphicnamed{StpandMem_vs_logdst}%
  \tikzsetnextfilename{tikz-external-StpandMem_vs_logdst}%
  \scalebox{1}{\begin{tikzpicture}
  \begin{groupplot}[%
      ,group style={rows=1,columns=2,group name=setupmem,horizontal sep=4em}
      ,width=\figurewidth
      ,height=\figureheight
      ,ymajorgrids
      ,xmajorgrids
      ,ytick scale label code/.code={v}
      ,xmode=log
      ,ymode=log
      ,xminorticks=true
      ,yminorticks=true
      ,legend style={legend columns=4,font=\footnotesize}
      ,legend cell align=left
      ,ylabel style={align=center}
    ]
    \nextgroupplot[%
      ,legend to name=setupmem-legend
      ,mark=none
      ,xmin=0.4
      ,xmax=100
      ,ymin=0.005
      ,ymax=100
      ,ylabel={$\log_{10}$(Setup) [Unprec. solves]}
      ,xlabel={$\displaystyle -\log_{10}\left(\text{Distance}\right)$}
    ]
    \addplot [color=plt-lc1,dash pattern=on 1pt off 3pt on 3pt off 3pt,line width=1.0pt,mark size=2.0pt,mark=o,mark options={solid}]
    table[row sep=crcr]{0.549540873857625	6.6\\
      1.25892541179417	8.51\\
      3.31131121482591	17.71\\
      19.9526231496888	19.75\\
      50.1187233627272	42.93\\
    };\addlegendentry{TT $(N=\num{3840})$}
    \addplot [color=plt-lc2,dash pattern=on 1pt off 3pt on 3pt off 3pt,line width=1.0pt,mark size=2.0pt,mark=triangle,mark options={solid}]
    table[row sep=crcr]{0.549540873857625	0.94\\
      1.25892541179417	1.5\\
      3.31131121482591	9.79\\
      19.9526231496888	12.08\\
      50.1187233627272	6.84\\
    };\addlegendentry{TT $(N=\num{30720})$}
    \addplot [color=plt-lc3,dash pattern=on 1pt off 3pt on 3pt off 3pt,line width=1.0pt,mark size=2.5pt,mark=diamond,mark options={solid}]
    table[row sep=crcr]{0.549540873857625	0.1\\
      1.25892541179417	0.2\\
      3.31131121482591	1.93\\
      19.9526231496888	1.37\\
      50.1187233627272	0.94\\
    };\addlegendentry{TT $(N=\num{245760})$}
    \addplot [color=plt-lc4,dash pattern=on 1pt off 3pt on 3pt off 3pt,line width=1.0pt,mark size=2.5pt,mark=x,mark options={solid}]
    table[row sep=crcr]{0.549540873857625	0.01\\
      1.25892541179417	0.02\\
      3.31131121482591	0.24\\
      19.9526231496888	0.23\\
      50.1187233627272	0.11\\
    };\addlegendentry{TT $(N=\num{1996080})$}
    \addplot [color=plt-lc1,solid,line width=1.0pt,mark size=2.0pt,mark=o,mark options={solid}]
    table[row sep=crcr]{0.549540873857625	0.13\\
      1.25892541179417	0.06\\
      3.31131121482591	0.04\\
      19.9526231496888	0.03\\
      50.1187233627272	0.03\\
    };\addlegendentry{HIF $(N=\num{3840})$}
    \addplot [color=plt-lc2,solid,line width=1.0pt,mark size=2.0pt,mark=triangle,mark options={solid}]
    table[row sep=crcr]{0.549540873857625	0.24\\
      1.25892541179417	0.15\\
      3.31131121482591	0.26\\
      19.9526231496888	0.51\\
      50.1187233627272	0.24\\
    };\addlegendentry{HIF $(N=\num{30720})$}
    \addplot [color=plt-lc3,solid,line width=1.0pt,mark size=2.5pt,mark=diamond,mark options={solid}]
    table[row sep=crcr]{0.549540873857625	0.88\\
      1.25892541179417	0.82\\
      3.31131121482591	1.57\\
      19.9526231496888	2.11\\
      50.1187233627272	1.07\\
    };\addlegendentry{HIF $(N=\num{245760})$}
    \nextgroupplot[%
      ,mark=none
      ,xmin=0.4
      ,xmax=100
      ,ymin=1
      ,ymax=100000
      ,ylabel={$\log_{10}$(Storage) [MB]}
      ,xlabel={$\displaystyle -\log_{10}\left(\text{Distance}\right)$}
    ]
    \addplot [color=plt-lc1,dash pattern=on 1pt off 3pt on 3pt off 3pt,line width=1.0pt,mark size=2.0pt,mark=o,mark options={solid}]
    table[row sep=crcr]{0.549540873857625	2.54\\
      1.25892541179417	4.17\\
      3.31131121482591	7.99\\
      19.9526231496888	9.33\\
      50.1187233627272	13.24\\
    };
    \addplot [color=plt-lc2,dash pattern=on 1pt off 3pt on 3pt off 3pt,line width=1.0pt,mark size=2.0pt,mark=triangle,mark options={solid}]
    table[row sep=crcr]{0.549540873857625	3.14\\
      1.25892541179417	5.17\\
      3.31131121482591	10.1\\
      19.9526231496888	16.59\\
      50.1187233627272	17.02\\
    };
    \addplot [color=plt-lc3,dash pattern=on 1pt off 3pt on 3pt off 3pt,line width=1.0pt,mark size=2.5pt,mark=diamond,mark options={solid}]
    table[row sep=crcr]{0.549540873857625	3.6\\
      1.25892541179417	6.06\\
      3.31131121482591	12.05\\
      19.9526231496888	19.95\\
      50.1187233627272	20.46\\
    };
    \addplot [color=plt-lc4,dash pattern=on 1pt off 3pt on 3pt off 3pt,line width=1.0pt,mark size=2.5pt,mark=x,mark options={solid}]
    table[row sep=crcr]{0.549540873857625	4.03\\
      1.25892541179417	6.81\\
      3.31131121482591	14.37\\
      19.9526231496888	22.51\\
      50.1187233627272	23.25\\
    };
    \addplot [color=plt-lc1,solid,line width=1.0pt,mark size=2.0pt,mark=o,mark options={solid}]
    table[row sep=crcr]{0.549540873857625	58\\
      1.25892541179417	70\\
      3.31131121482591	104\\
      19.9526231496888	150\\
      50.1187233627272	160\\
    };
    \addplot [color=plt-lc2,solid,line width=1.0pt,mark size=2.5pt,mark=triangle,mark options={solid}]
    table[row sep=crcr]{0.549540873857625	557\\
      1.25892541179417	880\\
      3.31131121482591	1830\\
      19.9526231496888	3064\\
      50.1187233627272	3247\\
    };
    \addplot [color=plt-lc3,solid,line width=1.0pt,mark size=2.5pt,mark=diamond,mark options={solid}]
    table[row sep=crcr]{0.549540873857625	6237\\
      1.25892541179417	13237\\
      3.31131121482591	27124\\
      19.9526231496888	45417\\
      50.1187233627272	48131\\
    };
  \end{groupplot}
  \node at ($(setupmem c1r1.north)!.5!(setupmem c2r1.north)$) [anchor=south,yshift=2pt] {\ref{setupmem-legend}};
  \node at (setupmem c1r1.north) [anchor=south, yshift=-.11\figureheight, xshift=.35\figurewidth]{(a)};
  \node at (setupmem c2r1.north) [anchor=south, yshift=-.11\figureheight, xshift=.35\figurewidth]{(b)};
\end{tikzpicture}
  \endpgfgraphicnamed%
\\
  \mcaption{fig:StpandMem-vs-logdst}{Preconditioner setup and memory
    vs. log distance}{Log of setup costs (in unpreconditioned solves)
    and storage requirements vs. log of the surface spacing in the
    lattice for \QTT\ and \HIF\ preconditioners with $\acc_p =
    \sci{-3}$.}
\end{figure}

In \pr{fig:StpandMem-vs-logdst}, we plot setup costs against the log of
the distance between surfaces.
Although displaying a sharp increase at first, preconditioner setup
times increase at a pace much slower than the cost of the
unpreconditioned solve, becoming more efficient.
We again observe that as $N$ increases, the \QTT\ preconditioner
becomes more cost-effective, becoming just a fraction of an
unpreconditioned solve for $N=\num{1966080}$.
The rate at which storage requirements increase for both solvers also
slows down as we reduce the distance, which means that they both
display robust behavior in spite of the added complexity.

\section{Conclusions\label{sec:conclusions}}
Motivated by the ongoing challenges to produce memory-efficient and
reliable fast solvers for integral equations in complex geometries, we
presented a robust framework employing the Tensor Train decomposition
to accelerate the solution of volume and boundary integral equations
in three dimensions.

In the context of volume integral equations on a regularly sampled
domain, even for problems with up to 10 million unknowns and for
relatively high target accuracies ($\acc=\sci{-6}$ to $\sci{-10}$), we
are able to produce a compressed inverse in no more than a few minutes
and store it using tens of \abbrev{MB}s of memory, When compared to the current
state-of-the-art direct solvers in three dimensions, \QTT\ is the only
such fast direct solver to retain practical performance for large
problem sizes.

In \pr{ssc:bie-lattice}, we showed that the \QTT\ framework is
applicable to matrices arising from the discretization of boundary
integral equations in complex, multiply-connected geometries.
Nonetheless, for a given target accuracy, the \QTT\ ranks are
considerably higher than the volume integral case, rendering high
target accuracies impractical.
We thus proposed using a low target accuracy (e.g., $\acc_p$ between
$\sci{-1}$ and $\sci{-3}$) version of the \QTT-based inverse as a
preconditioner for the \GMRES.
We compared its effectiveness and cost against two alternative
preconditioners, a simple multigrid and a low-accuracy \HIF\ inverse.

By virtue of being a multilevel, low-accuracy direct solver, the
\QTT\ preconditioner matches the \HIF\ in terms of reliability and
robustness across all examples presented.
We observe a clear trade-off between these two solvers: while \HIF\
generally provides higher speedups, modest setup costs and storage
requirements for the \QTT\ make it extremely cost-effective. This is
particularly the case for problems with a large number of unknowns, as
setting up the \QTT\ preconditioner typically becomes comparable to one
solve for the range of target and preconditioner accuracies presented.
In \pr{ssc:bie-lattice}, this allowed us to solve an external
Dirichlet problem for the Laplace equation in an irregular domain
composed of $4096$ surfaces. The setup time for the
\QTT\ preconditioner for this example is as small as a few
\FMM\ applies and has a memory footprint of less than
$10$~\abbrev{MB}.


There are several extensions of this work that can be pursued.
Here, all presented examples used a uniform hierarchical partition
of the domain, corresponding to a uniform tree.
We believe it is possible to extend the current implementation of the
\QTT\ algorithms to adaptive decompositions, using the connections
between Tensor Train and other hierarchical decomposition
techniques.
Another research direction is generalizing of the \QTT\ algorithms to
obtain effective parallel scaling.
It would be interesting to see whether the compute-bound nature of the
\QTT\ algorithms could be exploited to obtain good practical scaling.


\section{Acknowledgments}
We extend our thanks to Sergey Dolgov, Leslie Greengard, Ivan
Oseledets, and Mark Tygert for stimulating conversations about various
aspects of this work.
We are also grateful to Matthew Morse, Michael O'Neil, and Shravan
Veerapaneni for critical reading of this work and providing valuable
feedback.
We thank Kenneth~Ho and Lexing~Ying for kindly supplying the
\HIF\ code.
Support for this work was provided by the US National Science
Foundation (\abbrev{NSF}) under grant DMS-1320621.
E.C. acknowledges the support of the US National Science
Foundation (\abbrev{NSF}) through grant DMS-1418964.

\appendix
\section{\QTT\ decomposition as a hierarchical linear filter \label{apx:appendix_TT-filter}}
In \cite{oseledets2011algebraic}, the \QTT\ decomposition is
interpreted as a subspace approach, and in the case of the \abbrev{SVD} based
\QTT\ decomposition (or more generally, when tensor cores are
orthogonalized), as a fast method to compute a reduced orthogonal
basis for structured tensors.
We first show that the \QTT\ decomposition can be viewed as a linear
filter with respect to each of its cores.
We recall from \pr{sec:background} that the compression algorithm
proceeds by computing a sequence of low rank decompositions of
unfolding matrices.
For the \ordinal{(k-1)} unfolding matrix, we have the low rank
decomposition
\begin{equation}
\laT{A}_{\tree{T}}^{k-1}(\overline{i_1\dots i_{k-1}},
\overline{i_k\dots i_d}) = \laT{U}_{k-1}(\overline{i_1\dots i_{k-1}} ,
\alpha_{k-1}) \laT{V}_{k-1}(\alpha_{k-1} , \overline{i_k\dots i_d}).
\end{equation}

We then vectorize $\tensor{A}_{\tree{T}}$, using the formula for
products of matrices $\vec(\laT{ABC}) = (\laT{C}^{T} \otimes
\laT{A})\vec(\laT{B})$, to obtain
\begin{equation}
  \vec(\tensor{A}_{\tree{T}}) = (\laT{I}_{m_{k-1}} \otimes \laT{U}_{k-1})\vec(\laT{V}_{k-1}),
  \label{eq:filter_left}
\end{equation}
where $\laT{I}_{m_{k-1}}$ is the identity of size $m_{k-1} = \prod_{q
  \geq k} {n_q}$, and so the left factor is in fact a block-diagonal
matrix of $m_{k-1}$ copies of $\laT{U}_{k-1}$.
If $\laT{U}_{k-1}$ is orthogonal this factor is orthogonal as well,
and we also have that $\laT{V}_{k-1} =
\laT{U}_{k-1}^{\star}\laT{A}^{k-1}_{\tree{T}}$ and $\vec(\laT{V}_{k-1}) =
(\laT{I}_{m_{k-1}} \otimes \laT{U}_{k-1}^{\star})\vec(\tensor{A}_{\tree{T}})$.
Applying this same identity, we obtain
\begin{equation}
  \vec(\tensor{A}_{\tree{T}}) = (\laT{V}^{T}_{k-1} \otimes \laT{I}_{p_{k-1}})\vec(\laT{U}_{k-1}),
  \label{eq:filter_right}
\end{equation}
where $p_{k-1} = \prod_{q < k} {n_q}$, and again if
$\laT{V}^{T}_{k-1}$ is orthogonal, $\laT{U}_{k-1} =
(\laT{V}^{T}_{k-1})^{\star}\laT{A}^{k-1}_{\tree{T}}$ and
$\vec(\laT{U}_{k-1}) = (\laT{V}^{T}_{k-1} \otimes
\laT{I}_{p_{k-1}})^{\star}\vec(\tensor{A}_{\tree{T}})$.

Since the \QTT\ compression algorithm can proceed via sweeps of low
rank decompositions of unfoldings for $\laT{V}_{k}$ (left to right) or
of $\laT{U}_{k}$ (right to left), we can iterate
\pr{eq:filter_left} and \pr{eq:filter_right} separating one core at a
time.
Let $\tensor{G}_k$ denote the \ordinal{$k$} core of the
\QTT\ decomposition of $\laT{A}$, and $\laT{G}_k = \reshape
(\tensor{G}_k, [r_{k-1}n_k,r_k])$.
If we apply \pr{eq:filter_left} $k-1$ times, we obtain
\begin{equation}
  \vec(\tensor{A}_{\tree{T}}) = (\laT{I}_{m_1} \otimes
  \laT{G}_{1})(\laT{I}_{m_2} \otimes \laT{G}_{2}) \dots
  (\laT{I}_{m_{k-1}} \otimes \laT{G}_{k-1})\vec(\laT{V}_{k-1}).
\end{equation}
If we apply \pr{eq:filter_right}, we can write an explicit formula for
the \ordinal{$k$} core $\tensor{G}_k$
\begin{equation}
  \vec(\tensor{A}_{\tree{T}}) = (\laT{I}_{m_1} \otimes \laT{G}_{1}) \dots
  (\laT{I}_{m_{k-1}} \otimes \laT{G}_{k-1}) (\laT{G}^{T}_{d} \otimes
  \laT{I}_{p_{d}r_{k-1}}) \dots (\laT{G}^{T}_{k+1} \otimes
  \laT{I}_{p_{k+1}r_{k-1}}) \vec(\tensor{G}_{k})
  \label{eq:TT_reducedbasis}
\end{equation}

Following notation from \cite{dolgov2013amen1}, we denote the product
of the $d-1$ matrix factors in \pr{eq:TT_reducedbasis} as
$\tensor{P}_{\neq k}(\laT{A})$.
Its columns form a reduced tensor basis generated by $\{ \laT{G}_q
\}_{q\neq k}$, and $\vec(\tensor{G}_k)$ corresponds to the
coefficients that reconstruct $\laT{A}$.
If a correction step (e.g., using \abbrev{QR}) is implemented to make
$\{\laT{G}_{q}\}_{q<k}$ and $\{\laT{G}^{T}_{q}\}_{q>k}$ orthogonal,
then $\tensor{P}_{\neq k}(\laT{A})$ is orthogonal as well, and we have
\begin{equation}
  \vec(\tensor{G}_{k}) = (\laT{G}^{T}_{k+1} \otimes
  \laT{I}_{p_{k+1}r_{k-1}})^\star \dots (\laT{I}_{m_1} \otimes
  \laT{G}_{1})^\star \vec(\tensor{A}_{\tree{T}}) = \tensor{P}_{\neq k}(\laT{A})^\star
  \vec(\tensor{A}_{\tree{T}})
\label{eq:TT_applyfilter}
\end{equation}

Finally, we note that from \pr{eq:TT_applyfilter} an identity for
$\vec(\laT{A})$ can be readily found, as there exists a permutation
matrix $\Pi_{\tree{T}}$ such that $\vec(\laT{A}) = \Pi_{\tree{T}}
\vec(\tensor{A}_{\tree{T}})$.

\section{Setup of local linear systems for \QTT\ inversion algorithms \label{apx:appendix_inversion}}
The approximate inversion seeks to find $\la{X}$ that satisfies
$\laT{AX = I_N}$ or $\laT{AX + XA = 2I_N}$ and to extract a
\QTT\ decomposition for $\laT{X}$.
Given a set of proposed cores $\{ \tensor{W}_k \}_{k=1}^{d}$ for
$\laT{X}$ and fixing all but $\tensor{W}_{k}$, \pr{eq:TT_reducedbasis}
provides an explicit formula to interpret the \QTT\ decomposition as an
expansion in an orthonormal basis, with elements that depend on
$\{\tensor{W}_{j}\}_{j \neq k}$ and coefficients
$\vec(\tensor{W}_{k})$.
That is, we can write $\vec(\laT{X}) = \tensor{P}_{\neq k}(\laT{X})
\vec(\tensor{W}_{k})$ where $\tensor{P}_{\neq k}(\laT{X})$ is orthogonal.
\pr{eq:inverse} then becomes
\begin{equation}
  (\laT{I}_N \otimes \laT{A}) \tensor{P}_{\neq
    k}(\laT{X})\vec(\tensor{W}_{k}) = \vec(\laT{I}_N).
  \label{eq:system_Wk}
\end{equation}

From this one may readily notice that fixing all cores but
$\tensor{W}_k$ yields an overdetermined linear system.
Applying $\tensor{P}_{\neq k}(\laT{X})^\star$, we obtain the
equivalent reduced system
\begin{equation}
  \tensor{P}_{\neq k}(\laT{X})^\star(\laT{I}_N \otimes \laT{A}) \tensor{P}_{\neq
    k}(\laT{X}) \vec(\tensor{W}_{k}) = \tensor{P}_{\neq
    k}(\laT{X})^\star\vec(\laT{I}_N).
  \label{eq:redsystem_Wk}
\end{equation}
The matrix in the linear system above is of size $n_{k}r_{k-1}r_k
\times n_{k}r_{k}r_{k-1}$.  \pr{eq:redsystem_Wk} allows us to solve
for each core $\tensor{W}_k$ by projecting $\laT{X}$ onto this reduced
basis in which $\tensor{W}_k$ is the only degree of freedom.
We note that this necessarily leaves the size and rank of
$\tensor{W}_k$ fixed, and as a consequence, some strategy must be
implemented to increase ranks and accelerate convergence towards the
solution.
\begin{itemize}
\item In \abbrev{DMRG} methods, this issue is resolved by contracting two cores
  $\tensor{W}_k,\tensor{W}_{k+1}$ at a time into a supercore
  \begin{equation} \tensor{S}_{k}(\alpha_{k-1},\overline{i_k
      i_{k+1}},\alpha_{k+1}) = \tensor{W}_k (\alpha_{k-1},i_k,\alpha_k)
    \tensor{W}_{k+1}(\alpha_k,i_{k+1},\alpha_{k+1}), \end{equation}
  and solving the corresponding reduced system for $\tensor{S}_k$.
  The \ordinal{k} rank is now free (it was contracted in merging both
  cores) and it will be determined when newly computed $\tensor{S}_k$
  is split into cores.
\item In \abbrev{AMEN}-type methods \cite{dolgov2013amen1,dolgov2013amen2},
  the residual $\la{R}$ of this system is approximated in \QTT\ form, and
  then it is used in an enrichment step to expand the reduced basis
  and allow for ranks to increase.
\end{itemize}

We also note that even though we present these reduced systems
explicitly, in both \abbrev{DMRG} and \abbrev{AMEN} methods a recursive formula is
employed to construct them as they cycle through the cores of
$\laT{X}$.
In fact, it is shown in \cite{oseledets2012solution} that if
$\tensor{G}_k (\alpha_{k-1}, \overline{i_kj_k},\alpha_k)$ is the
\ordinal{k} core of the \QTT\ decomposition of $(\laT{I}_N \otimes
\laT{A})$, then we can write the left-hand side of
\pr{eq:redsystem_Wk} in the following form \begin{equation} \sum_{\alpha,\gamma,j}
  {\Psi_{k-1}(\alpha_{k-1},\beta_{k-1},\gamma_{k-1})
    \tensor{G}_k(\alpha_{k-1},\overline{i_kj_k},\alpha_k)
    \Phi_{k}(\alpha_k,\beta_{k},\gamma_{k})
    \tensor{W}_k(\gamma_{k-1},\overline{j_kp_k},\gamma_k)}
  \label{eq:inverse_Wk_TT},
\end{equation} where $\Psi_{k-1}$ is a function of $\{ \tensor{G}_j
\}_{j=1}^{k-1}$ and $\{ \tensor{W}_j \}_{j=1}^{k-1}$, and $\Phi_k$ a
function of $\{ \tensor{G}_j \}_{j=k+1}^{d}$ and $\{ \tensor{W}_j
\}_{j=k+1}^{d}$.
Moreover, recursive formulas $\psi$ and $\phi$ exist such that
\begin{equation}
  \Psi_k =
  \psi(\Psi_{k-1},\tensor{G}_k,\tensor{W}_k)\quad\text{and}\quad
  \Phi_{k+1} = \phi(\Phi_{k-1},\tensor{G}_k,\tensor{W}_k).
\end{equation}
Finally, we note that \pr{eq:inverse_Wk_TT} may be interpreted as a
compressed form of the matrix in the reduced linear system with a
3-dimensional \QTT\ decomposition with cores $\Psi$, $\tensor{G}$, and
$\Phi$.  If this linear system is relatively small, the matrix in
\pr{eq:redsystem_Wk} may be formed to solve this system
densely. Otherwise, it is preferable to use the fast \QTT\ matrix
vector apply on the tensor decomposition in \pr{eq:inverse_Wk_TT} to
solve this system using an iterative algorithm such as \GMRES\ or
\abbrev{Bi-CGSTAB}.

\printbibliography

\end{document}